 \newtheorem{theorem}{Theorem}[section]
 \newtheorem{cor}[theorem]{Corollary}
 \newtheorem{lemma}[theorem]{Lemma}
 \newtheorem{proposition}[theorem]{Proposition} \theoremstyle{definition}
 \newtheorem{definition}[theorem]{Definition}
 \theoremstyle{definition}
 \newtheorem{example}[theorem]{Example}
 \theoremstyle{remark}
 \newtheorem{rem}[theorem]{Remark}
 \numberwithin{equation}{section}
\newcommand{\ben}{\begin{equation}}
\newcommand{\een}{\end{equation}}
\newcommand{\integer}{\ensuremath{{\mathbb Z}}}
\newcommand{\real}{\ensuremath{{\mathbb R}}}
\newcommand{\complex}{\ensuremath{{\mathbb C}}}
\newcommand{\LL}{\mathcal{L}}
\newcommand{\HH}{\mathcal{H}}
\newcommand{\TT}{\mathcal{T}}
\newcommand{\To}{\longrightarrow}
\newcommand{\gr}{\mathfrak}
\newcommand{\GGC}{{\mathscr{G}}}
\newcommand{\XXC}{{\mathscr{X}}}
\newcommand{\TTB}{{\mathbb{T}}}
\newcommand{\JJB}{{\mathbb{J}}}
\begin{document}

\title{Chern-Weil homomorphism in twisted equivariant cohomology}

\author{Alexander Caviedes,  Shengda Hu and Bernardo Uribe}
\thanks{ The first and the third author were partially supported by COLCIENCIAS. }
\address{Departamento de Matem\'{a}ticas, Universidad Nacional,
Ciudad Universitaria, Bogot\'a, COLOMBIA}
\email{alephxander@gmail.com}
\address{D\'epartement de Math\'ematiques et de Statistique, Universit\'e de  Montr\'eal, CP 6128 succ Centre-Ville, Montr\'eal, QC H3C 3J7, Canada}
\address{Department of Pure Mathematics, University of Waterloo, 200 University Avenue West, Waterloo, Ontario, Canada N2L 3G1, Canada}
\email{shengda@dms.umontreal.ca, hshengda@math.uwaterloo.ca}
\address{Departamento de Matem\'{a}ticas, CINVESTAV, Apartado Postal 14-740 07000 M\'exico
D.F. M\'EXICO}
\address{Departamento de Matem\'{a}ticas, Universidad de los Andes,
Carrera 1 N. 18A - 10, Bogot\'a, COLOMBIA}
 \email{buribe@uniandes.edu.co, buribe@math.cinvestav.mx }
 \subjclass[2000]{55N91, 37K65}

\keywords{Chern-Weil, Exact Courant Algebroids, Hamiltonian
Actions, Twisted Equivariant cohomology}
\begin{abstract}
We describe the Cartan and Weil models of twisted equivariant cohomology 
together with the Cartan homomorphism among the two, and we extend the Chern-Weil 
homomorphism to the twisted equivariant cohomology. We clarify that in order to have a cohomology theory,
 the coefficients of the twisted equivariant cohomology must be taken in
the completed polynomial algebra over the dual Lie algebra of $G$. We recall the relation between
the equivariant cohomology of exact Courant algebroids and the twisted equivariant cohomology, and we show
how to endow with a generalized complex structure the finite dimensional approximations of the Borel construction $M\times_G EG_k$, whenever
the generalized complex manifold $M$ possesses a Hamiltonian $G$ action.
\end{abstract}

\maketitle

\section{Introduction}
In ordinary equivariant cohomology there are two well known models, the
bigger while more geometrical Weil model and the smaller
while more algebraic Cartan model. In \cite{HuUribe}, the authors
defined twisted equivariant cohomology following the Cartan model and
showed that the twisted equivariant cohomology satisfies the cohomology
axioms under some natural assumptions, for example, that the group $G$ is a compact Lie group. 
Under these assumptions, the
corresponding Cartan model is twisted by a closed equivariant $3$-form
 (\S\ref{compact}). In the non-twisted case, the Cartan homomorphism  gives
a quasi-isomorphism between the Weil and the Cartan models. In this
article, we show (\S\ref{twisted}, \S\ref{chern}) that in the twisted  case,
the twisted equivariant theory also has the corresponding twisted
Weil model and a that the Cartan homomorphism could be extended to the twisted models yielding also
a quasi-isomorphism between them  . Moreover, we show that the Chern-Weil homomorphism
can also be extended to the twisted equivariant case, which we use to demonstrate that the 
twisted equivariant cohomology is isomorphic to the twisted cohomology of the Borel construction; this provides us with an
alternative proof of the fact that the twisted cohomology is indeed a cohomology theory (cf.  \cite{HuUribe}).

One subtlety comes up when defining the equivariant theory
in the twisted case. In the ordinary Cartan model, we consider the
complex $ (\Omega^*(M) \otimes S(\gr g^*))^G$ with the equivariant
differential $d_G$. In the twisted case we have to consider the complex
$ (\Omega^*(M) \otimes \widehat{S}(\gr g^*))^G$ where the $\widehat{S}(\gr g^*)$ is the completion of the algebra
$S(\gr g^*)$, because otherwise, the twisted cohomology defined over the uncompleted algebra could  be non finitely generated
as we show in the Appendix (\S\ref{appendix}).

The twisted equivariant cohomology appeared as the equivariant theory
of generalized complex manifolds \cite{HuUribe, Lin2, Lin3, Lin4}. Then in the case of a Hamiltonian $G$-action on a generalized complex 
manifold $M$, when $G$ is compact and the finite approximations 
$BG_k$ of $BG$ are symplectic, 
we apply the coupling construction using the principle $G$-bundle $EG_k \to BG_k$ to
show that the finite approximations of the Borel construction $M \times_G EG_k$
are generalized complex as well. The explicit computation of the twisting form here
coincides with the result given by the twisted version of the Chern-Weil homomorphism.

The structure of the article is the following. In \S\ref{Courant} we recall the 
basics on the symmetries of the exact Courant algebroids as well as the definition of extended
equivariant cohomology from \cite{HuUribe}. In \S\ref{twisted} we recall the twisted 
equivariant cohomology in both the Cartan and the Weil models. We also clarify the subtlety 
about the completion mentioned above. Then we show in \S\ref{compact}, that when the Lie group $G$
is compact the extended equivariant cohomology is isomorphic to some twisted equivariant
cohomology. The Chern-Weil homomorphism and its consequences are shown in \S\ref{chern}.
In \S\ref{hamilton}, we describe the coupling construction for Hamiltonian actions on 
generalized complex manifolds using principle $G$-bundles. The Appendix \S\ref{appendix}
 compares the cohomologies of the completed and uncompleted complexes for the twisted equivariant cohomology
  using the example
\ref{Example circle circle}.

\vspace{0.3cm}

 {\bf Acknowledgements}

 S. Hu would like to thank the hospitality of Universidad de Los Andes where part of this work was done. B. Uribe would like to thank
 the hospitality of the Erwin Schr\"odinger Institut in Vienna and the Max Planck Institut f\"ur Mathematik in Bonn where part is this work was also done.
 We would like to thank conversations with E. Backelin, H. Bursztyn, G. Cavalcanti, T. Str\"obl and M. Xicot\'{e}ncatl. Finally, we would like to specially thank H. Bursztyn for the comments and advices he gave us after reading a preprint of this paper.

\section{Symmetries of Exact Courant Algebroids} \label{Courant}

\subsection{Exact Courant algebroid}
An exact Courant algebroid  is a Courant algebroid $\TT M$ over a manifold $M$ that
fits into the exact sequence
$$\xymatrix{
0 \ar[r] & T^*M \ar[r] & \TT M \ar[r]^a & TM \ar[r] & 0. }$$

The Courant algebroid $\TT M$ is endowed with a nondegenerate
symmetric bilinear form and a skew symmetric bracket called
Courant bracket (see \cite{Liu, Bursztyn}). One can always choose
a splitting $s : TM \to \TT M$ with isotropic image (see
\cite{Severa, SeveraWeinstein}) such that one can identify the extended Courant
algebroid with the direct sum of the tangent and the cotangent
bundles of $M$, namely
\begin{eqnarray*}
\TT M & \stackrel{\cong}{\to}  & \TTB M:= TM \oplus T^*M \\
\gr{X} & \mapsto & (a(\gr{X}), \gr{X} - s(a(\gr{X}))) \\
\gr{X} & \mapsto & ( X + \xi). \end{eqnarray*}

With this identification the Courant bracket on $\TT M$ becomes
the $H$ twisted Courant bracket on $\TTB M$, where $H$ is a closed three
form determined by the splitting:
$$[ X + \xi, Y + \eta ]_H = [X,Y] + \LL_X \eta - \LL_Y \xi -
\frac{1}{2} d ( \iota_X \eta - \iota_Y \xi) - \iota_X \iota_Y H;$$
and the bilinear form becomes:
$$\langle X + \xi, Y + \eta \rangle = \frac{1}{2}(\iota_X \eta + \iota_Y \xi).$$

Let us emphasize that the splitting of the Courant algebroid is
not canonical; for any two form $B$, the B-field transformation of
the splitting
$$e^B(X + \xi ) =  X + \xi + \iota_X B $$
gives another splitting of $\TT M$ with twisting three form $H-dB$. The cohomology class $[H] \in H^3(M; \real)$ is called the \v Severa class of $\TT M$.

From now on we will work with a chosen splitting of the Courant
algebroid $\TT M$. Hence, the three form $H$ will also be fixed.

\subsection{Symmetries of the Exact Courant Algebroid}
The group
$ {\rm{Diff}} M \ltimes \Omega^2(M)$
with composition 
$$(\lambda, \alpha) \circ ( \mu, \beta) = (\lambda \mu, \mu^*
\alpha + \beta)$$ acts on $\TTB M$ in the following way
\begin{eqnarray*}(\lambda, \alpha) \circ (X + \xi) = \lambda_*X +
(\lambda^{-1})^*(\xi + \iota_X \alpha) = \lambda_* X +
(\lambda^{-1})^* \xi + \iota_{\lambda_*X} \alpha  \end{eqnarray*} and its induced action on the twisted Courant
bracket becomes
$$(\lambda, \alpha) \circ [X+\xi, Y + \eta]_H =
[(\lambda,\alpha)\circ(X+ \xi), (\lambda,\alpha) \circ (Y +
\eta)]_{(\lambda^{-1})^*(H-d\alpha)}.$$
Hence, the group of symmetries of the Exact Courant algebroid $\TTB M$ is
$$\GGC_H:= \{(\lambda, \alpha) \in {\rm{Diff}} M \ltimes \Omega^2(M) \ | \ H = (\lambda^{-1})^*(H-d\alpha) \}$$

The Lie algebra $\XXC_H$ of $\GGC_H$ is then
$$\XXC_H = \{(X,A) \in \Gamma(TM) \oplus \Omega^2(M) \ | \ dA = -\LL_XH=-d \iota_X H\}$$
with Lie bracket
$$[(X,A),( Y,B)] = ([X,Y], \LL_XB- \LL_YA ),$$
and with infinitesimal action on $\TTB M$ given by
$$(X,A)\circ (Y+\eta) = [X,Y]+\LL_X \eta - \iota_YA .$$
Note then that $(X,A)$ belongs to $\XXC_H$ if and only if $d(A + \iota_X H)=0$ (this equation will be of use later).

\begin{definition}
A Lie group $G$ acts on the exact Courant algebroid $\TTB M$ if
there is a homomorphism
\begin{eqnarray*}
G  \to  \GGC_H \ \ \ \ \ \ \ \ \ \ \ \ 
g  \mapsto  (\lambda_g, \alpha_g),
\end{eqnarray*}
and a Lie algebra $\gr{g}$ acts infinitesimally on $\TTB M$ if there is a Lie algebra homomorphism
\begin{eqnarray*}
\gr{g} \to  \XXC_H  \  \ \ \ \ \ \ \ \ 
a  \mapsto  (X_a, A_a).
\end{eqnarray*}
\end{definition}

\subsection{Extended symmetries}

Note that there is a homomorphism of algebras
$$\kappa: \Gamma(\TTB M) \to \XXC_H \ \ \ \ \ \ (X+\xi) \mapsto (X, d\xi - \iota_XH)$$
that sends the Courant bracket to the Lie bracket on $\XXC_H$. With this in mind we have an action of $\Gamma(\TTB M)$ on itself
given by the formula
$$(X+\xi) \circ (Y+\eta) = [X,Y] + \LL_X\eta - \iota_Y(d\xi - \iota_XH).$$

\begin{definition} \label{definition extended action}
We will say that the Lie Group $G$ (or the Lie algebra $\gr{g}$) acts by extended symmetries on $\TTB M$
 whenever: \begin{itemize} \item the infinitesimal action factors through
$\Gamma( \TTB M)$ as algebras, i.e. there is an algebra homomorphism 
\begin{eqnarray*}
\delta: (\gr{g}, [,])&  \to & (\Gamma(\TTB M), [,]_H)   \\
a & \mapsto & (X_a + \xi_a) 
\end{eqnarray*}
that makes the infinitesimal action be $a \mapsto (X_a, d\xi_a - \iota_{X_a}H)$, and
\item the image of $\gr{g}$ in $\Gamma(\TTB M)$ is an
isotropic subspace, in other words, for every $a, b \in
\gr{g}$
$$\langle X_a + \xi_a, X_b + \xi_b \rangle = 0,$$
and what is the same $$\iota_{X_a}\xi_b =- \iota_{X_b}
\xi_a \ \ \  \mbox{and} \ \ \ \ \iota_{X_a} \xi_a=0.$$
\end{itemize}
\end{definition}

\begin{rem}
We would like to point out that an ``extended action", as we have defined above and in \cite{HuUribe}, is equivalent to a ``lifted action" as is defined in section 2.3 of \cite{Bursztyn1} (we use the skew-symmetric version of the Courant bracket while in \cite{Bursztyn1} is used the non-skew-symmetric version, but as in both cases the image of the Lie algebra must be isotropic, then the two definitions agree). In \cite{Bursztyn, Bursztyn1} the authors call ``extended action" a more general type of construction that includes Courant algebras and Courant algebra morphisms ${\gr{a}} \to \Gamma(\TTB M)$ that extend the Lie algebra morphism $\gr{g} \to \Gamma(TM)$.
\end{rem}

\subsection{Extended equivariant cohomology}

In \cite{HuUribe} the last two authors have defined an equivariant cohomology for extended actions. Let us recall the construction.

Consider the complex of differential forms $\Omega^\bullet(M):=\Omega^{\rm{even}}(M) \oplus \Omega^{\rm{odd}}(M)$ but with $\integer_2$ grading given by the parity of the degree and odd differential $d_H := d - H \wedge$ where $H$ is a closed 3-form on $M$. The cohomology of this complex $H^\bullet(M,H)$ is known as the $H$-twisted cohomology of $M$.

For $\gr{X}=X+\xi \in \Gamma(\TTB M)$ consider the even operator $\LL_\gr{X}$ and the odd operator $\iota_\gr{X}$ that act on 
$\Omega^\bullet(M)$ in the following way: for $\rho \in \Omega^\bullet(M)$ we have
$$\iota_\gr{X} \rho = \iota_X \rho + \xi \wedge \rho  \ \ \ \mbox{and} \ \ \ \LL_\gr{X} \rho= \LL_X \rho +(d\xi - \iota_X H) \wedge \rho.$$

If we consider any two elements $\gr{X}, \gr{Y}$ that lie in the image of the map $\delta : \gr{g} \to \Gamma(\TTB M)$, then the operators
$\LL$, $\iota$ and $d_H$ behave in the following way with respect to the graded commutators (see \cite[Thm. 4.4.3]{HuUribe}):
\begin{eqnarray*}
[d_H, \iota_\gr{X} ] = \LL_\gr{X}  \ \ \ \ \  [\LL_\gr{X}, \LL_\gr{Y}] = \LL_{ [\gr{X},\gr{Y} ]_H} \ \ \ \ \ \  [\iota_\gr{X}, \iota_\gr{Y} ]=0 
\end{eqnarray*}
\begin{eqnarray*}
[\LL_\gr{X}, \iota_\gr{Y} ] =\iota_{[\gr{X},\gr{Y}]_H} \ \ \ \ \ [d_H, \LL_\gr{X}]=0 \ \ \ \mbox{and} \ \ [d_H, d_H]=0.
\end{eqnarray*}

Following Cartan \cite{Guillemin}, we will consider the algebra $\Omega^\bullet(M) \otimes \widehat{S}(\gr{g}^*)$ of formal series on $\gr{g}$ with values
in $\Omega^\bullet(M)$. The algebra $ \widehat{S}(\gr{g}^*)$ is the $\gr{a}$-adic completion of the symmetric algebra $S(\gr{g}^*)$ where $\gr{a}$ is the ideal generated by all elements without constant term (see \cite[Ch.10]{AtiyahMacdonald}); in the next sections it will be clarified why it is necessary to complete the symmetric algebra . If $\{a,b,c\dots \}$ is a base of $\gr{g}$
and $\Omega^a, \Omega^b, \dots$ are dual elements of even degree then $S(\gr{g}^*)=\real[\Omega^a, \Omega^b, \dots]$ and
$\widehat{S}(\gr{g}^*)=\real[[\Omega^a, \Omega^b, \dots]]$. Recall also that for any $a$ and $b$ in the Lie algebra, $\iota_a \Omega^b=0$, $d\Omega^b=0$ and therefore $\LL_a \Omega^b=f^b_{ca}\Omega^c$ where the structural constants of the Lie algebra $\gr{g}$ satisfy $[b,c]=f^a_{bc}a$.

If we consider an extended action $\delta: \gr{g} \to \Gamma(\TTB M)$ we can extend the action of the operators $\LL_{\delta(a)}$ and $\iota_{\delta(a)}$ on the generators $\Omega^\bullet(M) \otimes \widehat{S}(\gr{g}^*)$ in the natural way, namely
$$\LL_{\delta(a)}( \rho \otimes \Omega^b) := (\LL_{\delta(a)} \rho) \otimes \beta + \rho \otimes \LL_a \Omega^b \ \ \ \ \ \mbox{and} \ \ \ \ 
\iota_{\delta(a)} ( \rho \otimes \Omega^b) := (\iota_{\delta(a)} \rho) \otimes \Omega^b,$$
and we can define the extended equivariant differential
\begin{eqnarray*}
d_{\gr{g}, \delta} : \Omega^\bullet(M) \otimes \widehat{S}(\gr{g}^*)  \to  \Omega^\bullet(M) \otimes \widehat{S}(\gr{g}^*) \\
\end{eqnarray*}
as the odd operator $$d_{\gr{g}, \delta} := d_H \otimes 1 +  \Omega^a \iota_{\delta(a)}$$ where the sum sum goes over a base of $\gr{g}$ and we are
using the repeated index convention.

It is easy to check that
$$(d_{\gr{g}, \delta})^2 \rho = - \Omega^a \LL_{\delta(a)} \rho $$
and therefore the second two authors have proposed the following definition (see \cite[Def. 5.1.1]{HuUribe})

\begin{definition}
Let $\delta : \gr{g} \to \Gamma(\TTB M)$ be an extended action, then the $\gr{g}$-extended equivariant complex of $\TTB M$ is the $\integer_2$ graded complex
$$C_\gr{g}^\bullet(\TTB M;\delta) := \{\rho \in \Omega^\bullet(M) \otimes \widehat{S}(\gr{g}^*) \ | \ 
 \LL_{\delta(a)} \rho =0 \ \mbox{for all} \ a \in \gr{g}\};$$
with differential $d_{\gr{g},\delta}$. The cohomology of $H_\gr{g}^\bullet(\TTB M; \delta)$ of the complex $C_\gr{g}^\bullet(\TTB M; \delta)$
is the extended $\gr{g}$-equivariant De Rham cohomology of $\TTB M$ under the extended action defined by $\delta$.
\end{definition}


Let us note that the extended $\gr{g}$-equivariant cohomology does not depend on the choice of splitting for $\TT M$ and its isomorphism class depends only on the \v Severa class of the Courant algebroid. If one performs a $B$-field transform, the action transforms $\delta$ to to $\delta'(a)=X_a+\xi_a + \iota_{X_a}B$, one gets the isomorphism $e^B : \Omega^\bullet(M) \to \Omega^\bullet(M)$ and therefore one obtains a quasi-isomorphism of complexes
$C_\gr{g}^\bullet(\TTB M;\delta) \to C_\gr{g}^\bullet(\TTB M;\delta')$ with $d_{\gr{g}, \delta'} := d_{H-dB} \otimes 1 +  \Omega^a \iota_{\delta'(a)}$ that induces an isomorphism of cohomologies $H_\gr{g}^\bullet(\TTB M; \delta) \cong H_\gr{g}^\bullet(\TTB M; \delta')$.

Also, if we take the Cartan complex
$$C_\gr{g}^*(M) =\{ \rho \in \Omega^*(M) \otimes {S}(\gr{g}^*) \ | \ 
\LL_{X_a} \rho=0  \ \mbox{for all} \ a \in \gr{g}\}$$
 with differential $d_\gr{g} \rho = d \rho - \Omega^a\iota_{X_a} \rho$, then the extended $\gr{g}$-equivariant complex  $C_\gr{g}^\bullet(\TTB M;\delta)$ becomes a module over $C_\gr{g}^*(M)$ and therefore the extended $\gr{g}$ equivariant cohomology $H_\gr{g}^\bullet(\TTB M;\delta)$ is a module over the equivariant cohomology $H_\gr{g}^*(M)$. In particular we have that $H_\gr{g}^\bullet(\TTB M;\delta)$ is also a module over $S(\gr{g}^*)^\gr{g}=H_\gr{g}^*( \cdot)$.

So far we do not know whether the extended equivariant cohomology fulfills all the properties of an equivariant cohomology theory, nor if it has a topological counterpart. Nevertheless, in the case that the group $G$ is compact, the extended equivariant cohomology turns out to be 
equivalent to what is known as twisted equivariant cohomology. This will be the subject of the next sections.

\section{Twisted equivariant cohomology}\label{twisted}

In as much as the twisted cohomology is defined twisting the differential of the De Rham complex with a closed 3-form, the twisted
equivariant cohomology is defined by twisting the equivariant differential with a closed and equivariant 3-form. 

We will define the twisted equivariant cohomology using the Cartan model and we will explicitly show the relation with the twisted Weil model. Then we will generalize the Chern-Weil map for twisted equivariant cohomology and we will finish by giving the topological counterpart of the twisted equivariant cohomology.
 \begin{rem}
 We would like to emphasize the fact that the twisted cohomology is a $\integer_2$-graded theory; therefore all inverse limits that will be carried out in this section will be $\integer_2$ -graded. Let us see the difference with a simple example:
\begin{quotation}
Take the $\integer$-graded rings $H^*(\complex P^n)=\real[x]/x^{n+1}$ where $|x|=2$. The inverse limit of these $\integer$-graded rings is the polynomial algebra $H^*(\complex P^\infty)=\real[x]$. Now, if one consider the same rings $H^\bullet(\complex P^n)=\real[x]/x^{n+1}$ but $\integer_2$-graded, as in the case of twisted cohomology, the inverse limit of these rings gives the algebra of formal series
  $H^\bullet(\complex P^\infty)=\real[[x]]$.
\end{quotation}
  To distinguish between $\integer$ and $\integer_2$ graded theories we will denote the former with an asterisk ${H}^*$ and the latter with a bullet ${H}^\bullet$.
  
  \end{rem}
Let us start by recalling the models of Cartan and Weil for the equivariant coholomogy (see \cite{MathaiQuillen}).

\subsection{Equivariant cohomology}

Following Weil one introduces a universal model for the curvature
and connection on a principal $G$ bundle.  The Weil algebra is
then by definition
$$W(\gr{g}) := S(\gr{g}^*) \otimes \Lambda(\gr{g}^*)$$
the tensor product of the symmetric algebra and the exterior
algebra of $\gr{g}^*$. If we denote with lower case letters
$a,b,c, \dots$ a base for the Lie algebra $\gr{g}$ then we will
denote by $\theta^a$ the variables dual to $a$ of degree one that
generate the exterior algebra, and by $\Omega^a$ the variables of
degree two that generate the symmetric algebra.

The derivations and contractions on this algebra are generated by
$$ \iota_a \theta^b = \delta_{ab} \ \ \ \ \ \ \iota_a \Omega^b= 0
\ \ \ \ \ d\theta^a=\Omega^a- \frac{1}{2}f^a_{bc} \theta^b\theta^c
\ \ \ \ \ d\Omega^a= f^a_{bc} \Omega^b \theta^c$$ where $f^a_{bc}$
are the structural constants: $[b,c]= f^a_{bc} a$.

The tensor product $ \Omega^*(M) \otimes W(\gr{g}) $ is a
differential graded algebra with a $\gr{g}$ action and derivations
$\iota_a$ satisfying the standard identities of the contractions.
The contractions $\iota_a$ act on the differential forms
$\Omega^*(M)$ by contracting on the direction of the vector field
$X_a$ that $a$ generates, but to make the notation less heavy we
will simply denote $\iota_a$ the operator $\iota_{X_a}$. The Lie derivative $\LL_a$ is 
defined by the Cartan formula $\LL_a = d\iota_a + \iota_a d$.

 The basic
subalgebra
$$\Omega_\gr{g}^*(M) := \{ \Omega^*(M) \otimes W(\gr{g}) \}_{bas} := \bigcap_{a} \left(\ker \LL_a \cap \ker \iota_a\right)$$ 
is a
differential graded algebra whose elements are called (Weil)
equivariant differential forms and whose cohomology $H^*(\Omega^*_\gr{g}(M))$ is the $G$-equivariant
cohomology of $M$.

Cartan \cite{Cartan} cf. \cite{MathaiQuillen} showed that there is
a smaller model for the equivariant forms which is given by the
$G$-invariant forms on $(\Omega^*(M) \otimes S(\gr{g}^*) )^\gr{g}$ with
differential given by $d_\gr{g} = d - \Omega^a \iota_a.$ Let us denote the cohomology
of this complex  by $$H^*_{\gr{g}}(M) := H^*\left((\Omega^*(M) \otimes S(\gr{g}^*)
)^\gr{g}, d_\gr{g}\right).$$

Cartan showed that there is a homomorphism \begin{eqnarray*}j:
\Omega^*(M) \otimes S(\gr{g}^*) 
 & \to &   \Omega^*(M) \otimes W(\gr{g})\\
\alpha & \mapsto & \prod_a (1 - \theta^a \iota_a) \alpha 
\end{eqnarray*}
that induces a quasi-isomorphism of complexes
\begin{equation}
j : (\Omega^*(M) \otimes S(\gr{g}^*) )^\gr{g} \stackrel{\sim}{\to} \Omega^*_\gr{g}(M)  \label{quasiiso j}
\end{equation} and therefore the
map $j$ induces an isomorphism of equivariant cohomologies
$$j: H^*_\gr{g}(M) \stackrel{\cong}{\to} H^*(\Omega^*_\gr{g}(M)).$$

\subsection{Twisted equivariant cohomology}

Let us start by taking a closed equivariant 3-form $\HH= H + \Omega^a\xi_a$ on the Cartan complex $(\Omega^*(M)\otimes S(\gr{g}^*))^\gr{g}$
with $H$ a 3-form and $\xi_a$ 1-forms on $M$.

The fact that $\HH$ is closed implies that
$$d_\gr{g} \HH = (d-\Omega^b\iota_b)(H + \Omega^a \xi_a) = dH + \Omega^a
(d\xi_a - \iota_a H) - \Omega^a \Omega^b \iota_b \xi_a=0 $$
which happens if and only if
$$dH=0 \ \ \ \ d\xi_a - \iota_a H=0 \ \ \ \mbox{and } \ \ \ \iota_a \xi_b =-\iota_b \xi_a.$$

The fact that $\HH$ is equivariant implies that for all $b \in \gr{g} $
$$\LL_b\HH = \LL_b \HH + (\LL_b \Omega^a) \xi_a +
\Omega^a
(\LL_b \xi_a) = \LL_b \HH +  f^a_{cb} \Omega^c \xi_a + \Omega^a(\LL_b \xi_a)=0;$$
this happens if and only if $\HH$ is $\gr{g}$ invariant and $\xi_{[b,a]}= \LL_b\xi_a$.

\begin{rem}
For an extended action $\delta : \gr{g} \to \Gamma(\TTB M)$ the three form $H + \Omega^a\xi_a$, with $H$ is the twisting form and $\xi_a= \delta(a)-X_a $, would be closed and equivariant form if and only if $d\xi_a = - \iota_aH$, as this would imply that $\LL_aH=d\iota_aH=-dd\xi_a=0$. 
\end{rem}

Let us now see what is the image in the Weil model of
the three form $\HH$. We need this in order to have a very
explicit description of the twisted Chern-Weil homomorphism.
Fortunately the expression of the three form turned out to be very
simple, its proof not; we will reproduce here the calculations.

\begin{proposition}
The image of $\HH$ in $\Omega^*_{\gr{g}}(M)$ under the quasi-isomorphism
$j$ defined in (\ref{quasiiso j}) is the basic three form
$${\bf H} := j(\HH)=H + d(\theta^a \xi_a) -\frac{1}{2}d(\theta^p\theta^q
\iota_q \xi_p).$$
\end{proposition}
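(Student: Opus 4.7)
The plan is to expand both sides of the claimed equality as elements of $\Omega^*(M)\otimes W(\gr g)$ and match monomial by monomial. For the left hand side, observe that for distinct basis vectors $a, b$ the operators $T_a = \theta^a\iota_a$ (no summation) are idempotent, since $\iota_a\theta^a = 1$ and $\theta^a\theta^a=0$, and commute with $T_b$, since $T_aT_b = T_bT_a = -\theta^a\theta^b\iota_a\iota_b$. Hence $\prod_a(1-T_a) = \sum_{S}(-1)^{|S|}\prod_{a\in S}T_a$ with distinct indices, and applied to $\HH = H+\Omega^c\xi_c$ essentially everything truncates: on $\Omega^c\xi_c$ only $|S|\le 1$ survives since $\iota_a\xi_c$ is a $0$-form and $\iota_a\Omega^c=0$; on the $3$-form $H$ only $|S|\le 3$ survives. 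A short bookkeeping of signs yields
$$j(\HH) = H - \theta^a\iota_aH - \tfrac12\theta^a\theta^b\iota_a\iota_bH + \tfrac16\theta^a\theta^b\theta^c\iota_a\iota_b\iota_cH + \Omega^c\xi_c - \theta^a\Omega^c\iota_a\xi_c.$$

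For the right hand side I would expand the two exterior derivatives using $d\theta^a = \Omega^a - \tfrac12 f^a_{bc}\theta^b\theta^c$ together with the three identities extracted in the preamble from $d_\gr g\HH = 0$ and $\LL_b\HH = 0$, namely $d\xi_a=\iota_aH$, $\iota_a\xi_b+\iota_b\xi_a=0$, and $\LL_b\xi_a=\xi_{[b,a]}=f^r_{ba}\xi_r$. The last one enters precisely through $d(\iota_q\xi_p) = \LL_q\xi_p - \iota_qd\xi_p = \xi_{[q,p]} - \iota_q\iota_pH$. After this expansion the matching of most monomial types is automatic: the pieces $H$, $\Omega^c\xi_c$ and $\theta^a\iota_aH$ come from $H+d(\theta^a\xi_a)$; the pieces $\theta^a\Omega^c\iota_a\xi_c$ and $\tfrac12\theta^a\theta^b\iota_a\iota_bH$ come from $-\tfrac12d(\theta^p\theta^q\iota_q\xi_p)$ after applying $\iota_a\xi_b = -\iota_b\xi_a$; and the two stray $\theta\theta\xi$ contributions, one equal to $-\tfrac12 f^a_{bc}\theta^b\theta^c\xi_a$ (from $d\theta^a$ inside $d(\theta^a\xi_a)$), the other to $-\tfrac12\theta^p\theta^q\xi_{[q,p]}=-\tfrac12 f^r_{qp}\theta^p\theta^q\xi_r$, cancel by antisymmetry of the structure constants.

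The one delicate step, and in my view the main obstacle, is identifying the cubic $\theta$-term. On the left one has $\tfrac16\theta^a\theta^b\theta^c\iota_a\iota_b\iota_cH$, while on the right the two $d\theta$ factors inside $d(\theta^p\theta^q\iota_q\xi_p)$ produce $\tfrac14 f^p_{bc}\theta^b\theta^c\theta^q\iota_q\xi_p-\tfrac14 f^q_{bc}\theta^p\theta^b\theta^c\iota_q\xi_p$, an expression that does not a priori look like a triple contraction of $H$. The bridge is iterating the equivariance identities: from $\iota_aH=d\xi_a$ and $\LL_b\xi_a = f^d_{ba}\xi_d$ one derives
$$\iota_c\iota_b\iota_aH = f^d_{ba}\iota_c\xi_d - f^d_{cb}\iota_d\xi_a - f^d_{ca}\iota_b\xi_d,$$
and the total antisymmetry of $\theta^a\theta^b\theta^c$ together with the antisymmetry of $\iota_a\xi_b$ makes the three summands equal after cyclic relabeling, so both the left and the right collapse to the single monomial $\tfrac12 f^d_{bc}\theta^a\theta^b\theta^c\iota_a\xi_d$. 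This closes the last matching and proves the identity.
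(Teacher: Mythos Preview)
Your proof is correct and follows essentially the same approach as the paper's: both expand each side in the Weil algebra using $d\theta^a=\Omega^a-\tfrac12 f^a_{bc}\theta^b\theta^c$ together with the identities $d\xi_a=\iota_aH$, $\iota_a\xi_b=-\iota_b\xi_a$, $\LL_b\xi_a=\xi_{[b,a]}$, and then match monomials. The only differences are organizational: you keep the contractions $\iota_a\iota_b H$, $\iota_a\iota_b\iota_c H$ intact on the $j(\HH)$ side and derive the closed formula $\iota_c\iota_b\iota_aH=f^d_{ba}\iota_c\xi_d-f^d_{cb}\iota_d\xi_a-f^d_{ca}\iota_b\xi_d$ to match the cubic term, whereas the paper substitutes $\iota_bH=d\xi_b$ immediately and reduces the cubic piece to $-\tfrac12\theta^b\theta^c\theta^e\iota_e\xi_{[c,b]}$; these are the same computation in different dress.
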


\begin{proof}
We will proceed by expanding the derivations in the three form
${\bf H}$ and we will compare them with the expansion of $j(\HH)$.

Let us start by expanding ${\bf H}$:
\begin{eqnarray}
{\bf H} & = & H+  d(\theta^a \xi_a) -\frac{1}{2}d(\theta^p\theta^q
\iota_q \xi_p) \nonumber \\
& = & H + \Omega^a \xi_a -\frac{1}{2}f^a_{bc}
\theta^b\theta^c\xi_a -\theta^a d\xi_a -\frac{1}{2}\Omega^p
\theta^q \iota_q \xi_p +
\frac{1}{4}f^p_{rs}\theta^r\theta^s\theta^q\iota_q \xi_p \nonumber\\
& & + \frac{1}{2}\theta^p \Omega^q \iota_q \xi_p -
\frac{1}{4}f^q_{tu}\theta^p\theta^t\theta^u\iota_q\xi_p -
\frac{1}{2}\theta^p\theta^q d\iota_b\xi_p \nonumber \\
 &=& H+\Omega^a \xi_a
-\frac{1}{2} \theta^b\theta^c\xi_{[b,c]} -\theta^a d\xi_a -
\Omega^p \theta^q \iota_q \xi_p +
\frac{1}{2}\theta^r\theta^s\theta^q\iota_q \xi_{[r,s]} \nonumber\\
& &  - \frac{1}{2}\theta^p\theta^q d\iota_b\xi_p. \label{Weil
three form}
\end{eqnarray}

Numerate from left to right the expressions in line (\ref{Weil
three form}). We will match them with the expansion of $j(H_G)$.
Let us calculate then:

\begin{eqnarray}
j(H_G) & = & (1-\theta^e\iota_e)(1-\theta^c\iota_c)(1 -
\theta^b\iota_b)(H + \Omega^a\xi_a) \nonumber\\
& = & H + \Omega^a\xi_a - \theta^b\iota_b(H + \Omega^a\xi_a)
+\frac{1}{2} \theta^b\theta^c\iota_c\iota_b(H+ \Omega^a\xi_a)
\nonumber \\
&& -\frac{1}{6}\theta^b\theta^c\theta^e\iota_e\iota_c\iota_b(H+
\Omega^a\xi_a)\nonumber \\
&=& H+ \Omega^a\xi_a -\theta^bd\xi_b -\theta^b
\Omega^a\iota_b\xi_a + \frac{1}{2}\theta^b\theta^c \iota_c d\xi_b
-\frac{1}{6}\theta^b\theta^c\theta^e\iota_e\iota_c d \xi_b
\nonumber
\end{eqnarray}
As we have that
$$\iota_c d\xi_b = \LL_c \xi_b - d \iota_c \xi_b= \xi_{[c,b]} - d \iota_c
\xi_b,$$ then
\begin{eqnarray}
j(H_G) &=& H+ \Omega^a\xi_a -\theta^bd\xi_b -\theta^b
\Omega^a\iota_b\xi_a + \frac{1}{2}\theta^b\theta^c \xi_{[c,b]}
 - \frac{1}{2}\theta^b\theta^c d \iota_c \xi_b \nonumber\\
& & -\frac{1}{6}\theta^b\theta^c\theta^e\iota_e\iota_c d \xi_b.
\label{expansion j(HG)}
\end{eqnarray}
We see that the first 6 terms in (\ref{expansion j(HG)}) match all
but the sixth term in (\ref{Weil three form}). Let us then expand
the last term in (\ref{expansion j(HG)}):
\begin{eqnarray*}
-\frac{1}{6}\theta^b\theta^c\theta^e\iota_e\iota_c d \xi_b &=&
-\frac{1}{6}\theta^b\theta^c\theta^e\iota_e(\LL_c\xi_b -
d\iota_c\xi_b)\\
&=& -\frac{1}{6}\theta^b\theta^c\theta^e \left( \iota_e
\xi_{[c,b]} - \LL_e \iota_c \xi_b \right)\\
&=&-\frac{1}{6}\theta^b\theta^c\theta^e \left( \iota_e \xi_{[c,b]}
-\iota_c \LL_e \xi_b - \iota_{[e,c]} \xi_b \right)\\
&=&-\frac{1}{6}\theta^b\theta^c\theta^e \left( \iota_e \xi_{[c,b]}
-\iota_c \xi_{[e,b]} + \iota_{b} \xi_{[e,c]} \right)\\
&=&-\frac{1}{2}\theta^b\theta^c\theta^e  \iota_e \xi_{[c,b]}
\end{eqnarray*}
and we can see that it matches the sixth term in (\ref{Weil three
form}). Here we have used the fact that
$$\iota_{[e,c]} \xi_b = f^a_{ec} \iota_a\xi_b =- f^a_{ec} \iota_b
\xi_a= - \iota_b \xi_{[e,c]}.$$
\end{proof}

Let us note that as ${\bf H}$ is basic we have that $\iota_a
{\bf H}=0$, $\LL_a {\bf H}=0$ and  $d{\bf H}=0$.

\vspace{0.5cm}

Having in hand the closed three forms defined previously,
 we can now change the differential in the Cartan model as
well as in the Weil model. But as the twisted equivariant cohomology is a $\integer_2$-graded
theory, we first need to complete the symmetric algebra $S(\gr{g}^*)$ in both the Cartan and the Weil model.
Then let us denote $\widehat{W}(\gr{g}) = \widehat{S}(\gr{g}^*)\otimes \Lambda(\gr{g}^*)$. 

Now, in the Cartan model define the twisted equivariant differential as
$$d_{\gr{g},\HH} := d_\gr{g} - \HH\wedge$$
and in the Weil model as
$$d_{\bf H}:= d - {\bf H}\wedge.$$
As we have that $(d_{\gr{g},\HH})^2=0$ and $(d_{\bf H})^2=0$, we can define:
\begin{definition}
The (Cartan) twisted equivariant cohomology is the cohomology of
the complex of $\gr{g}$-invariant forms of $\Omega^\bullet(M) \otimes \widehat{S}(\gr{g}^*)$ and  the twisted differential $d_{\gr{g},\HH}$,
i.e.
$$H^\bullet_\gr{g}(M,\HH):= H^\bullet((\Omega^\bullet(M) \otimes \widehat{S}(\gr{g}^*))^\gr{g}; d_{\gr{g},\HH}).$$
The (Weil) twisted equivariant cohomology is the cohomology of the
basic forms $\widehat{\Omega}^\bullet_\gr{g}(M)= ( \Omega^\bullet(M)\otimes \widehat{W}(\gr{g}))_{bas}$
with the twisted differential $d_{\bf H}$, i.e.
$$H^\bullet(\widehat{\Omega}^\bullet_\gr{g}(M), {\bf H}): = H^\bullet( \widehat{\Omega}^\bullet_\gr{g}(M); d_{\bf H}).$$
\end{definition}

As the map $j : \Omega^\bullet(M) \otimes \widehat{S}(\gr{g}^*) \to \Omega^\bullet (M) \otimes \widehat{W}(\gr{g})$
induces a quasi-isomorphism of complexes $j : (\Omega^\bullet(M) \otimes \widehat{S}(\gr{g}^*))^\gr{g} \to \widehat{\Omega}^\bullet_\gr{g} (M)$
 and $j(H_G)={\bf H}$, then we can
conclude that $j$ also induces a quasi-isomorphism of twisted
complexes. So we have
\begin{proposition}
The Cartan and the Weil twisted equivariant cohomologies are
isomorphic, $$j:H^\bullet_\gr{g}(M,\HH) \stackrel{\cong}{\to}
H^\bullet(\widehat{\Omega}^\bullet_\gr{g}(M), {\bf H}).$$
\end{proposition}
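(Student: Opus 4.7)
The plan is to leverage the untwisted quasi-isomorphism $j : (\Omega^\bullet(M)\otimes \widehat{S}(\gr{g}^*))^\gr{g} \to \widehat{\Omega}^\bullet_\gr{g}(M)$ together with the identity $j(\HH)={\bf H}$ just established, and to upgrade this to the twisted setting by a filtration/spectral-sequence argument.

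First I would verify that $j$ is a chain map for the twisted differentials, i.e.\ $d_{\bf H}\circ j = j\circ d_{\gr{g},\HH}$. Since the untwisted identity $d\circ j = j\circ d_\gr{g}$ is already in hand, the only new identity to check is the intertwining
\[
j(\HH\wedge\alpha) = {\bf H}\wedge j(\alpha)
\]
for every $\gr{g}$-invariant $\alpha$. Because $j=\prod_a(1-\theta^a\iota_a)$ is a product of (non-derivation) idempotents rather than a ring homomorphism, this has to be checked by a direct expansion, exploiting $\iota_a\Omega^b=0$ (so the polynomial generators appearing in $\HH=H+\Omega^a\xi_a$ commute past each idempotent factor of $j$), the identity $j(\HH)={\bf H}$, and the $\gr{g}$-invariance of $\alpha$.

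Next I would filter both twisted complexes by polynomial degree in $\widehat{S}(\gr{g}^*)$, setting $F^p=(\Omega^\bullet(M)\otimes \widehat{S}^{\geq p}(\gr{g}^*))^\gr{g}$ and analogously on the Weil side. Every term appearing in either twisted differential---namely $d$, $\Omega^a\iota_a$, and wedge products with $\HH$ or ${\bf H}$---either preserves or strictly raises polynomial degree, and the same holds for $j$; passing to the associated graded kills the degree-raising parts and reduces the comparison to the untwisted Cartan-to-Weil map on each finite polynomial-degree piece, where~(\ref{quasiiso j}) gives a quasi-isomorphism. Completeness of $\widehat{S}(\gr{g}^*)$, which is precisely why the completion is imposed, makes both filtrations complete and Hausdorff, so the corresponding spectral sequences converge to the respective twisted equivariant cohomologies, and the isomorphism on the $E_1$-page propagates to the abutments by the usual comparison theorem.

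The main obstacle will be the chain-map verification in the first step: since $j$ is not multiplicative, the intertwining $j(\HH\wedge\alpha)={\bf H}\wedge j(\alpha)$ cannot be invoked as a formal algebra property and must be established by explicit manipulation of the idempotents $1-\theta^a\iota_a$ against the specific form of $\HH$. An alternative route, which avoids repeating Cartan-style calculations similar to the proof of $j(\HH)={\bf H}$, is to repackage the whole argument through the homological perturbation lemma, taking the untwisted $j$ as the initial quasi-isomorphism and wedge multiplication by the twisting form as a filtration-preserving perturbation on either side.
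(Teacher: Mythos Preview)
Your overall strategy---verify that $j$ intertwines the twisted differentials, then run a filtration/spectral-sequence comparison---is exactly what the paper intends; the paper's own argument is the single sentence that since $j$ is an untwisted quasi-isomorphism and $j(\HH)={\bf H}$, the twisted statement follows, with the convergence details spelled out only later in the proof of Theorem~\ref{Theorem Twisted Chern-Weil}.

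Two points in your execution need correction, though. First, the ``main obstacle'' you worry about is illusory: $j$ \emph{is} an algebra homomorphism (the paper already calls it one). Each factor $1-\theta^a\iota_a$ is multiplicative because $(\theta^a)^2=0$ kills the only cross term:
\[
(x-\theta^a\iota_a x)(y-\theta^a\iota_a y)=xy-\theta^a(\iota_a x)\,y-(-1)^{|x|}\theta^a x\,(\iota_a y)=(1-\theta^a\iota_a)(xy).
\]
Hence $j(\HH\wedge\alpha)=j(\HH)\wedge j(\alpha)={\bf H}\wedge j(\alpha)$ on the nose, and the chain-map check is a one-liner; no perturbation lemma is needed.

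Second, your polynomial-degree filtration does not reduce the associated graded to the situation of~(\ref{quasiiso j}). Wedging by $H$ preserves polynomial degree, and on the Weil side so do the Chevalley--Eilenberg piece of $d$ and the $\Omega$-free terms of ${\bf H}$; these all survive to $E_0$, so the $E_0$ (or $E_1$) comparison is not the untwisted Cartan-to-Weil map. The filtration that works cleanly is by \emph{total} degree, exactly as in Theorem~\ref{Theorem Twisted Chern-Weil}: then the twisting raises filtration by three, the full $d_\gr{g}$ and $d$ appear as $d_1$, (\ref{quasiiso j}) gives the isomorphism on $E_2$, and completeness of $\widehat{S}(\gr{g}^*)$ guarantees convergence just as you note.
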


In \cite{HuUribe} the last two authors have shown that the twisted equivariant
cohomology possesses all the properties of a cohomology theory.
Moreover, as the twisted equivariant cohomology is a module over
the symmetric algebra $S(\gr{g}^*)^\gr{g}$ (the equivariant cohomology of
a point) then the last two authors have shown that a generalization of the
localization theorem of Atiyah-Bott \cite{AtiyahBott} in the case of torus actions holds for
the twisted equivariant cohomology; namely, for $F=M^G$ the fixed
point set of the action, and $i:F \to M$ the inclusion, one has
that in a suitable localization, for all classes $x \in
H^\bullet_\gr{g}(M,\HH)$ the following formula holds
$$x = \sum_{Z \subset F} i^Z_*(i_Z^*(x)) \wedge e_G(\nu_Z)^{-1}$$
where $Z$ runs over the connected components of $F$, and
$e_G(\nu_Z)$ is the equivariant Euler class of $\nu_Z$ the normal
bundle of $Z$ in $M$ (see Theorem 7.2.5 in \cite{HuUribe}).

\section{Extended equivariant cohomology for compact Lie groups}\label{compact}

In this section we will show that in the case of an extended action of a compact Lie group, one can find an appropriate choice
of splitting of the exact Courant algebroid in such a way that the extended equivariant cohomology becomes the twisted equivariant cohomology
of the previous section. 

Let us first show how the group of symmetries $\GGC_H$ change after performing a $B$-field transform, thus changing the splitting of $\TTB M$.
For $(\lambda, \alpha) \in \GGC_H$ it is easy to check that
$$(\lambda, \alpha + \lambda^*B - B) \circ e^B(X+\xi) = e^B((\lambda,\alpha) \circ (X+\xi)),$$
and therefore the group of symmetries changes to
\begin{eqnarray}
e^B : \GGC_H & \to & \GGC_{H-dB} \nonumber \\
(\lambda, \alpha) & \mapsto & (\lambda, \alpha +\lambda^*B - B). \label{B-field group}
\end{eqnarray}
Let us denote $\overline{H}:=H-dB$.

\begin{lemma} [Prop. 2.11 \cite{Bursztyn}] \label{lemma invariant splitting}
Consider the action of a compact Lie group $G$ on the extended Courant algebroid $\TTB M$ given by the map $G \to \GGC_H$, $g \mapsto (\lambda_g, \alpha_g)$. Then there exists a 2-form $B$ such that $$\alpha_g + \lambda_g^*B - B=0 \ \ \ \mbox{for all} \ g \in G; $$
and therefore after changing the splitting with $B$ (see equation \ref{B-field group}), the action of $G$ is only given by diffeomorphisms,
i.e. $G \to \GGC_{\overline{H}}$, $g \mapsto (\lambda_g, 0)$. Moreover, the 3-form $\overline{H}=H-dB$ is $G$ invariant.
\end{lemma}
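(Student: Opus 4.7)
The statement is a standard averaging argument, and the compactness of $G$ enters precisely to ensure the existence of a bi-invariant Haar measure of total mass one, over which we can integrate smooth families of forms.

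My plan is to first extract the cocycle identity satisfied by $\alpha_g$ from the fact that $g \mapsto (\lambda_g, \alpha_g)$ is a homomorphism into $\mathrm{Diff}\,M \ltimes \Omega^2(M)$. Using the composition rule stated at the beginning of \S\ref{Courant},
\[
(\lambda_{gh}, \alpha_{gh}) = (\lambda_g,\alpha_g)\circ(\lambda_h,\alpha_h) = (\lambda_g\lambda_h,\, \lambda_h^*\alpha_g + \alpha_h),
\]
so one obtains $\alpha_{gh} = \lambda_h^*\alpha_g + \alpha_h$. This is a $1$-cocycle condition with values in $\Omega^2(M)$, and the equation we must solve, $\alpha_g = B - \lambda_g^* B$, asks exactly that this cocycle be a coboundary.

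The second step is the averaging. Let $dg$ be the normalized Haar measure on $G$ and set
\[
B := \int_G \alpha_g\, dg \in \Omega^2(M).
\]
Then for any $h \in G$, using the cocycle identity $\lambda_h^*\alpha_g = \alpha_{gh} - \alpha_h$ and the right-invariance of $dg$,
\[
B - \lambda_h^* B = \int_G (\alpha_g - \lambda_h^*\alpha_g)\,dg = \int_G (\alpha_g - \alpha_{gh})\,dg + \alpha_h = \alpha_h,
\]
which rearranges to $\alpha_h + \lambda_h^* B - B = 0$, as required. Applying the formula (\ref{B-field group}) with this $B$, each $(\lambda_g,\alpha_g)$ is sent to $(\lambda_g, 0) \in \GGC_{\overline H}$, so the new action is by honest diffeomorphisms.

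Finally, for the $G$-invariance of $\overline H = H - dB$, recall from the definition of $\GGC_H$ that $(\lambda_g,\alpha_g)\in\GGC_H$ is equivalent to $\lambda_g^* H = H - d\alpha_g$. Hence
\[
\lambda_g^*\overline H = \lambda_g^* H - d(\lambda_g^* B) = H - d\alpha_g - d(\lambda_g^* B) = H - d(\alpha_g + \lambda_g^* B) = H - dB = \overline H,
\]
since $\alpha_g + \lambda_g^* B = B$. There is no real obstacle here; the only point that deserves care is the smoothness (and well-definedness) of $\int_G \alpha_g\,dg$, which uses compactness of $G$ together with smoothness of the $G$-action in $g$, and the use of \emph{right}-invariance of Haar measure at the key step in the cocycle computation.
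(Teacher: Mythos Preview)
Your argument is correct and follows essentially the same averaging strategy as the paper. The only cosmetic difference is packaging: the paper rephrases the problem by defining an affine $G$-action on $\Omega^2(M)$ via $g\cdot B := (\lambda_g^{-1})^*(B-\alpha_g)$ and then averages the point $0$ to produce a fixed point, whereas you work directly with the group $1$-cocycle identity $\alpha_{gh}=\lambda_h^*\alpha_g+\alpha_h$ and average $\alpha_g$ itself; the two choices of $B$ in fact agree once one uses the bi-invariance of Haar measure on a compact group and the relation $(\lambda_g^{-1})^*\alpha_g=-\alpha_{g^{-1}}$ coming from $\alpha_e=0$. Your verification of the $G$-invariance of $\overline H$ is also the same computation as the paper's, just written out explicitly rather than appealing to the membership $(\lambda_g,0)\in\GGC_{\overline H}$.
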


\begin{proof}
For  $g \in G$ and $B \in \Omega^2(M)$ let us define the 2-form $$g \cdot B := (\lambda_g^{-1})^*( B-\alpha_g ).$$
This becomes an action of $G$ on $\Omega^2(M)$ as we have that
\begin{eqnarray*}
(hg)\cdot B&=& ((\lambda_{hg}^{-1})^*(B- \alpha_{hg} ) = (\lambda_{h}^{-1})^*(\lambda_{g}^{-1})^*(B-\lambda_{g}^*\alpha_h - \alpha_g )\\
& =& (\lambda_{h}^{-1})^* \left[  (\lambda_{g}^{-1})^*(B-\alpha_g ) - \alpha_h \right] = h\cdot(g \cdot B).
\end{eqnarray*}
Now, taking a $G$-invariant metric $d\mu$ with total volume 1, we can define the 2-form
$$B:= \int_G (h \cdot 0) \ d\mu(h) = \int_G (\lambda_h^{-1})^*\alpha_h \ d\mu(h)$$
which clearly satisfies $g \cdot B =B$ for all $g \in G$, and therefore we have that 
$\lambda_g^*B = B -\alpha_g$.

The fact the $\overline{H}$ is invariant follows from the definition of $\GGC_{\overline{H}}$; for $({\lambda}, \overline{\alpha}) \in \GGC_{\overline H}$ we have that $\lambda^*\overline{H}= H - d\overline{\alpha}$. But for all $g \in G$ we have that $\overline{\alpha}_g=0$, then $\lambda_g^*\overline{H}=\overline{H}$.
\end{proof}

In the case that the compact Lie group $G$ acts by extended symmetries $$\delta: \gr{g} \to \Gamma(\TTB M) \to \XXC_H$$
 $$a \mapsto (X_a,\xi_a) \mapsto (d\xi_a - \iota_{X_a}H), $$ the change of splitting of lemma \ref{lemma invariant splitting} defines
 a map $\overline{\delta} : \gr{g} \to \Gamma(\TTB M)$, $$ \overline{\delta}(a) = X_a + \overline{\xi}_a = X_a + \xi_a + \iota_{X_a}B,$$
and as the action is only given by diffeomorphisms we have that the 2-forms $d\overline{\xi}_a - \iota_{X_a} \overline{H}$ are all equal to zero.
Therefore we can conclude,
\begin{cor} \label{cor d xi - iota H}
If the compact Lie group acts by extended symmetries and we perform the $B$-field transform of lemma \ref{lemma invariant splitting}, then for all
$a \in \gr{g}$ we have that $$d\overline{\xi}_a - \iota_{X_a} \overline{H} =0$$ where the infinitesimal action is given by $\overline{\delta}(a)=X_a+\overline{\xi}_a$.
\end{cor}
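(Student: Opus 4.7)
The plan is to compare two \emph{a priori} different descriptions of the infinitesimal $\gr{g}$-action on $\TTB M$ after the $B$-field transform of Lemma \ref{lemma invariant splitting}: one obtained by differentiating the transformed group action, the other obtained by projecting the transformed extended action $\overline{\delta}$ to $\XXC_{\overline{H}}$. Forcing these two to agree will yield exactly the identity $d\overline{\xi}_a - \iota_{X_a}\overline{H}=0$.

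First I would make the $B$-field transform of the extended action explicit. Since $e^B(X_a + \xi_a) = X_a + \xi_a + \iota_{X_a}B$, applying $e^B$ componentwise to $\delta : \gr{g} \to \Gamma(\TTB M)$ yields the map $\overline{\delta}(a) = X_a + \overline{\xi}_a$ with $\overline{\xi}_a = \xi_a + \iota_{X_a}B$, and this is still an extended action in the new splitting whose twisting 3-form is $\overline{H}=H-dB$. By the factorization built into Definition \ref{definition extended action}, the induced infinitesimal symmetry of $\TTB M$ in the new splitting is
\[
a \;\longmapsto\; \bigl(X_a,\; d\overline{\xi}_a - \iota_{X_a}\overline{H}\bigr) \in \XXC_{\overline{H}}.
\]

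Next I would compute the same infinitesimal action directly from the group homomorphism. Lemma \ref{lemma invariant splitting} says that after the $B$-field transform, the $G$-action is $g \mapsto (\lambda_g, 0) \in \GGC_{\overline{H}}$, i.e.\ purely by diffeomorphisms. Differentiating at the identity gives the infinitesimal action $a \mapsto (X_a, 0) \in \XXC_{\overline{H}}$.

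Both maps describe the same infinitesimal action of $\gr{g}$ on $\TTB M$ (the one obtained from the original $G$-action, transported via $e^B$), so comparing the second components forces $d\overline{\xi}_a - \iota_{X_a}\overline{H}=0$, as required. The only point requiring a small amount of care is the functoriality of the $B$-field transform under differentiation of the $G$-action: one must check that the diagram relating $G \to \GGC_H$, the $B$-field transform $e^B:\GGC_H \to \GGC_{\overline{H}}$ from (\ref{B-field group}), and the derived maps on Lie algebras commutes. This is a direct unwinding of the definitions and is the only potentially tedious step; everything else is a one-line identification.
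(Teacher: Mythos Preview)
Your proposal is correct and follows essentially the same argument as the paper: the paper's justification (given in the paragraph immediately preceding the corollary) is precisely that after the $B$-field transform of Lemma~\ref{lemma invariant splitting} the group action is purely by diffeomorphisms, so the $2$-form component $d\overline{\xi}_a - \iota_{X_a}\overline{H}$ of the induced infinitesimal action in $\XXC_{\overline{H}}$ must vanish. You are simply being a bit more explicit than the paper about the two descriptions of the infinitesimal action and about the functoriality check, which the paper leaves implicit.
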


\begin{lemma} [Thm. 2.13 \cite{Bursztyn}] \label{overline HH}
Let the compact Lie group $G$ act on $\TTB M$ by extended symmetries. Then, after performing 
the $B$-field transform of lemma \ref{lemma invariant splitting}, the 3-form
$$ \overline{\HH} := \overline{H} + \Omega^a \overline{\xi}_a \in \Omega^*(M) \otimes S(\gr{g}^*)$$
 becomes $G$-invariant and $d_G$-closed. Hence, it defines an equivariant De Rham class $[\overline{\HH}] \in H^3_G(M)$.
\end{lemma}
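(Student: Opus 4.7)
The plan is to verify the two sets of algebraic identities that the computation at the beginning of \S\ref{twisted} showed to characterize a closed equivariant 3-form of the shape $H+\Omega^a\xi_a$, now applied to $\overline{\HH}$. Concretely, $d_{\gr{g}}\overline{\HH}=0$ reduces to the three conditions $d\overline{H}=0$, $d\overline{\xi}_a=\iota_{X_a}\overline{H}$, and $\iota_{X_a}\overline{\xi}_b+\iota_{X_b}\overline{\xi}_a=0$; while $\gr{g}$-invariance of $\overline{\HH}$ reduces to $\LL_{X_b}\overline{H}=0$ together with $\LL_{X_b}\overline{\xi}_a=\overline{\xi}_{[b,a]}$. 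Upgrading $\gr{g}$-invariance to full $G$-invariance is then automatic from the $G$-invariance of $\overline{H}$ given by Lemma \ref{lemma invariant splitting} and the equivariance of $\overline{\delta}$.

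For the three closedness identities: $d\overline{H}=0$ is immediate from $\overline{H}=H-dB$ together with $dH=0$; the second identity is precisely the content of Corollary \ref{cor d xi - iota H}; and for the third I would substitute $\overline{\xi}_a=\xi_a+\iota_{X_a}B$ and combine the original isotropy $\iota_{X_a}\xi_b+\iota_{X_b}\xi_a=0$ of the extended action with the trivial antisymmetry $\iota_{X_a}\iota_{X_b}B+\iota_{X_b}\iota_{X_a}B=0$, which together give the required isotropy for $\overline{\xi}$.

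For the invariance, $\LL_{X_b}\overline{H}=0$ is the second half of Lemma \ref{lemma invariant splitting}. To prove $\LL_{X_b}\overline{\xi}_a=\overline{\xi}_{[b,a]}$, I would exploit that the $B$-field transform conjugates the $H$-twisted Courant bracket to the $\overline{H}$-twisted one (as recorded in \S\ref{Courant}), so that $\overline{\delta}(a)=X_a+\overline{\xi}_a$ remains a Lie algebra homomorphism into $(\Gamma(\TTB M),[\cdot,\cdot]_{\overline{H}})$. Expanding the explicit formula for this Courant bracket applied to $\overline{\delta}(a)$ and $\overline{\delta}(b)$, the $d$-exact symmetric term collapses by the isotropy just proved, and the surviving 1-form part simplifies to $\LL_{X_a}\overline{\xi}_b$ by the Cartan formula combined with $d\overline{\xi}_b=\iota_{X_b}\overline{H}$. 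This yields $\overline{\xi}_{[a,b]}=\LL_{X_a}\overline{\xi}_b$; substituting into $\LL_b\overline{\HH}=\LL_{X_b}\overline{H}+f^a_{cb}\Omega^c\overline{\xi}_a+\Omega^a\LL_{X_b}\overline{\xi}_a$ and reindexing via $f^a_{bc}=-f^a_{cb}$ forces the two remaining terms to cancel.

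The main obstacle is purely notational: keeping track of signs and indices in the Courant bracket expansion and in the structure-constant identities. Conceptually the lemma is simply the observation that $B$-field transforms preserve extended actions, so the transformed $\overline{\HH}$ automatically packages the same data that made $\HH$ equivariantly closed before the transform.
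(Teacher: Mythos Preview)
Your proposal is correct and follows essentially the same route as the paper: both verify $d_G$-closedness via $d\overline H=0$, Corollary~\ref{cor d xi - iota H}, and isotropy, and both establish $\gr g$-invariance by expanding the $\overline H$-twisted Courant bracket on $\overline\delta(a),\overline\delta(b)$ and simplifying to $\overline\xi_{[a,b]}=\LL_{X_a}\overline\xi_b$ using exactly the ingredients you list. The only cosmetic differences are that the paper checks invariance before closedness, and deduces isotropy of $\overline\xi$ by observing that $B$-field transforms preserve the pairing rather than substituting $\overline\xi_a=\xi_a+\iota_{X_a}B$ as you do.
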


\begin{proof}
Let us first check that $\overline{\HH}$ is $G$-invariant, so for $a,b,c\dots$ a base of $\gr{g}$ we have that
\begin{eqnarray*}
\LL_b \overline{\HH}& =& \LL_b \overline{H} + (\LL_b \Omega^a) \overline{\xi}_a + \Omega^a (\LL_b \overline{\xi}_a)\\
&= &\LL_b \overline{H} + (f^a_{cb} \Omega^c) \overline{\xi}_a + \Omega^a (\overline{\xi}_{[b,a]})\\
&=& 
 \LL_b \overline{H} + \Omega^c \overline{\xi}_{[c,b]} + \Omega^a \overline{\xi}_{[b,a]} \\ &=& 0
 \end{eqnarray*}
 where we have used that $\LL_b \overline{\xi}_a = \overline{\xi}_{[b,a]}$. This last equality  
 follows from the fact that 
  the map $\gr{g} \to \Gamma(\TTB M)$ is an algebra map, and so we have
  \begin{eqnarray*}\overline{\xi}_{[a,b]}& =& \LL_{X_a}\overline{\xi}_b - \LL_{X_b}
\overline{\xi}_a -\frac{1}{2}d(\iota_{X_a}\overline{\xi}_b - \iota_{X_b}
\overline{\xi}_a) + \iota_{X_b}\iota_{X_a}\overline{H}\\
& = & \LL_{X_a} \overline{\xi}_b - d\iota_{X_b}
 \overline{\xi}_a -\iota_{X_b}d\overline{\xi}_a +d \iota_{X_b}\overline{\xi}_a + \iota_{X_b}d\overline{\xi}_a\\
& = &\LL_{X_a}\overline{\xi}_b. \end{eqnarray*}

 Now let us calculate $d_G \overline{\HH}$:
 \begin{eqnarray*}
 d_G \overline{\HH} = dH + \Omega^a (d\overline{\xi}_a - \iota_a \overline{H}) + \Omega^b\Omega^c(\iota_b \overline{\xi}_a + \iota_a \overline{\xi}_b)
 \end{eqnarray*}
 and as $H$ is closed, $d\overline{\xi}_a - \iota_a \overline{H}=0$ becasue of lemma \ref{cor d xi - iota H} and 
 $$\iota_b \overline{\xi}_a + \iota_a \overline{\xi}_b = \langle X_a + \overline{\xi}_a, X_b+ \overline{\xi_b} \rangle =0$$ as the action is isotropic (see
  definition \ref{definition extended action}), then $d_G \overline{\HH}=0$.
\end{proof}

Knowing that the 3-form $\overline{\HH}$ is invariant and closed, we can conclude this section with the following result

\begin{theorem} \label{isomorphism to overline HH}
Let the compact Lie group $G$ act on $\TTB M$ by extended symmetries. Then the extended equivariant cohomology $H^\bullet_\gr{g}(\TTB M, \delta)$ is isomorphic to the twisted equivariant cohomology
$H^\bullet_G(M, \overline{\HH})$ where $\overline{\HH}$ is the equivariant closed 3-form of lemma \ref{overline HH}.
\end{theorem}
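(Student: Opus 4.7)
The strategy is to reduce both sides to complexes sitting in the common space $(\Omega^\bullet(M)\otimes \widehat{S}(\gr{g}^*))^\gr{g}$ and exhibit a direct isomorphism between them. First, using the fact noted in \S\ref{Courant} that the extended equivariant cohomology is invariant under $B$-field transforms, I would apply Lemma \ref{lemma invariant splitting} to replace $\delta$ by $\overline{\delta}(a) = X_a+\overline{\xi}_a$ and $H$ by $\overline{H}=H-dB$, obtaining an isomorphism $H^\bullet_\gr{g}(\TTB M,\delta)\cong H^\bullet_\gr{g}(\TTB M,\overline{\delta})$. After this reduction, Corollary \ref{cor d xi - iota H} gives $d\overline{\xi}_a-\iota_{X_a}\overline{H}=0$, so $\LL_{\overline{\delta}(a)}\beta = \LL_{X_a}\beta$ for any $\beta\in \Omega^\bullet(M)$; combining this with the fact that $\overline{\delta}(a)$ acts on the $\widehat{S}(\gr{g}^*)$-factor via the coadjoint representation (the same action used in the ordinary Cartan model), the extended invariance condition coincides with the ordinary Cartan invariance condition. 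Thus the underlying $G$-invariant subspaces of the two complexes agree.

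Next, expanding the definitions on this common invariant subspace yields
\begin{equation*}
d_{\gr{g},\overline{\delta}} = d - \overline{H}\wedge + \Omega^a\,\iota_{X_a} + \Omega^a\,\overline{\xi}_a\wedge, \qquad d_{\gr{g},\overline{\HH}} = d - \overline{H}\wedge - \Omega^a\,\iota_{X_a} - \Omega^a\,\overline{\xi}_a\wedge,
\end{equation*}
which agree except for a sign on the $\Omega^a$-terms; this sign discrepancy is a consequence of the opposite conventions used for the Cartan differential $d_\gr{g}=d-\Omega^a\iota_{X_a}$ and the extended differential $d_{\gr{g},\delta}=d_H+\Omega^a\iota_{\delta(a)}$ introduced earlier. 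To absorb this sign, I would introduce the involution $\sigma$ on $\Omega^\bullet(M)\otimes \widehat{S}(\gr{g}^*)$ that is the identity on the $\Omega^\bullet(M)$-factor and the algebra automorphism $\Omega^a\mapsto -\Omega^a$ on $\widehat{S}(\gr{g}^*)$ (well defined as an even operator since every $\Omega^a$ has degree two).

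A direct check on monomials $\rho_I\otimes\Omega^I$ then shows $\sigma\circ d_{\gr{g},\overline{\delta}} = d_{\gr{g},\overline{\HH}}\circ\sigma$, so $\sigma$ intertwines the two differentials. Moreover $\sigma$ commutes with the (linear) coadjoint action on $\widehat{S}(\gr{g}^*)$, hence restricts to an involution of the $G$-invariant subspace, yielding the desired isomorphism of cochain complexes and therefore of cohomologies. Composing with the $B$-field transform isomorphism from the first step produces the claimed isomorphism $H^\bullet_\gr{g}(\TTB M,\delta)\cong H^\bullet_G(M,\overline{\HH})$. The only subtlety is the bookkeeping of signs: once $\sigma$ is identified as the correct adjustment, the rest is a direct verification from Lemma \ref{lemma invariant splitting}, Corollary \ref{cor d xi - iota H}, and Lemma \ref{overline HH}.
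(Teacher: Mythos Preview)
Your proposal is correct and follows essentially the same route as the paper: apply the $B$-field transform of Lemma~\ref{lemma invariant splitting} to pass from $\delta$ to $\overline{\delta}$, use Corollary~\ref{cor d xi - iota H} to see that the extended invariance condition reduces to the ordinary Cartan invariance, and then compare the two differentials on the common invariant subspace.

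The one point where you diverge is in the handling of the sign. The paper simply rewrites
\[
d_{\gr{g},\overline{\delta}} = d - \overline{H}\wedge + \Omega^a\iota_a + \Omega^b\overline{\xi}_b\wedge
\]
and declares this equal to $d_\gr{g} - \overline{\HH}\wedge$, i.e.\ asserts that the extended and twisted differentials literally coincide. With the conventions stated earlier in the paper ($d_\gr{g}=d-\Omega^a\iota_a$ and $\overline{\HH}=\overline{H}+\Omega^a\overline{\xi}_a$) this identification is off by the sign on the $\Omega^a$-terms, exactly as you observed. Your introduction of the involution $\sigma:\Omega^a\mapsto -\Omega^a$ is the honest way to reconcile the two conventions; it commutes with the coadjoint action and intertwines the differentials, so it supplies the missing step. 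In substance the arguments are the same, but your version keeps track of the signs that the paper's computation elides.
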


\begin{proof}
Let us perform the $B$-field transform of lemma \ref{lemma invariant splitting}. Then we have  a quasi-isomorphism of extended complexes
$C^\bullet_\gr{g}(\TTB M, \delta) \to C^\bullet_\gr{g}(\TTB M, \overline{\delta})$ that induces an isomorphism of cohomologies
$$H^\bullet_\gr{g}(\TTB M, \delta) \cong H^\bullet_\gr{g}(\TTB M, \overline{\delta}).$$

Now for $\rho \in \Omega^\bullet(M)$, we have that $$\LL_{\overline{\delta}(a)} \rho = \LL_a \rho + (d \overline{\xi}_a - \iota_a \overline{H}) \wedge \rho = \LL_a \rho,$$
and therefore the extended complex becomes
$$C_\gr{g}^\bullet(\TTB M, \overline{\delta}) = \{ \rho \in \Omega^\bullet(M) \otimes \widehat{S}(\gr{g}^*)  \  | \ \LL_a \rho =0 \ \mbox{for all} \ a \in \gr{g} \}$$
with derivative $d_{\gr{g}, \overline{\delta}} := d_{\overline{H}} + \Omega^a \iota_{\overline{\delta}(a)}$ which can be expanded and transformed into
\begin{eqnarray*}
d_{\gr{g}, \overline{\delta}} & = & d - \overline{H}\wedge + \Omega^a \iota_a + \Omega^b \overline{\xi}_b \wedge \\
& = & d +  \Omega^a \iota_a - ( \overline{H} - \Omega^b \overline{\xi}_b)\wedge \\
 & = & d_\gr{g} - \HH \wedge\\ & =& d_{\gr{g}, \overline{\HH}}.
\end{eqnarray*}

Thus the extended complex $C_\gr{g}^\bullet(\TTB M, \overline{\delta})$ is the same as the twisted complex
$$C_G^\bullet(M,\overline{\HH}) =(\Omega^\bullet(M) \otimes \widehat{S}(\gr{g}^*))^G ; d_{\gr{g}, \overline{\HH}}).$$
Therefore the cohomologies $H^\bullet_\gr{g}(\TTB M, \overline{\delta}) $ and $H^\bullet_G(M, \overline{\HH})$ are the same. The theorem follows.
\end{proof}


Let us finish this section with an example.

\begin{example} \label{Example circle circle}
Let $G=S^1$ and $M=S^1$ where $G$ acts trivially on $M$, but with extended action given by the map
$$\delta : \real \to \Gamma(\TTB S^1), \ \ \  \delta(1) = d\theta$$
where $d\theta \in \Omega^1(S^1)$.
The extended equivariant cohomology $H^\bullet_\gr{g}(\TTB M, \delta)$ becomes the cohomology of the complex
$$\Omega^\bullet(S^1) \otimes \real[[ \Omega]] \ \ \mbox{with differential} \ \ d_{\gr{g}, \delta}=d - d\theta \Omega \wedge$$ 
which is the twisted equivariant cohomology $H_{S^1}^\bullet(M,d\theta \Omega)$.

One can check easily that the closed forms are the odd forms. Now let us see which odd forms are exact. Consider the odd form $f_i d\theta  \Omega^i$ and the even form $g_j \Omega^j$. The equation $d_{\gr{g}, \delta} (g_j \Omega^j)=f_i d\theta  \Omega^i$ is equivalent to the equations
$dg_0 = f_0 d\theta$ and $dg_i - g_{i-1}d\theta = f_i d\theta$ for $i>0$. If $\int_{S^1} f_0 d\theta =0$ these equations are solved inductively starting from $0$ and making sure that one chooses the $g_i$ such that their integral satisfies $\int_{S^1} (g_{i} + f_{i+1})d \theta =0$.

Then the twisted equivariant cohomology is
$$H_{S^1}^{1}(M,d\theta \Omega) =\real   \ \ \ \mbox{and} \ \ \ H_{S^1}^{0}(M,d\theta \Omega)=0.$$

\end{example}

With this particular example one can show what happens in the case that the twisted cohomology is defined without completing the symmetric algebra. The calculations will be done in the Appendix, section \ref{appendix}.

\section{Twisted Chern-Weil homomorphism} \label{chern}

In this section we will extend the Chern-Weil homomorphism for the twisted case.  Then let us start by recalling the basics
of Chern-Weil theory. From now on the Lie group $G$ will be compact and therefore all equivariant cohomologies will have the subscript $G$.

Let $P$ be a principal $G$ bundle together with its connection and curvature
$$\theta \in (\Omega^1(P) \otimes \gr{g})^G \ \ \ \ \ \ \Omega  \in (\Omega^2(P) \otimes \gr{g})^G,$$
satisfying the identities
$$\iota_X \theta = X, \ \ \  \iota_X \Omega=0, \ (X \in \gr{g}) \ \ \ \Omega = d\theta + \frac{1}{2}[\theta,\theta] \ \mbox{and} \ \ d\Omega= [\Omega,\theta].$$
The connection and curvature determine maps $$\gr{g}^* \to \Omega^1(P), \ \ \ \gr{g}^* \to \Omega^2(P)$$
that induce a homomorphism of graded algebras
$$W(\gr{g}) \to \Omega^*(P)$$ which is the unique homomorphism carrying the universal connection and curvature to the connection
and curvature of $P$. This homomorphism is called the Weil homomorphism.

If $M$ is a manifold with a $G$-action, then the Weil homomorphism for the $G$-principal bundle
$M \times P \to M \times_G P$ (where $G$-acts diagonally) combined with the lifting of forms from $M$
to $P \times M$ determine a homomorphism
$$w : \Omega^*(M) \otimes W(\gr{g})  \to \Omega^*(M \times P)$$
wich induces a map of basics subalgebras
$$\overline{w} : \Omega^*_G(M) \to \Omega^*(M \times P)_{bas} \cong \Omega^*(M \times_G P)$$
which is a homomorphism of differential graded algebras. This map is known as the Chern-Weil homomorphism
determined by the connection in $M \times P$.

The induced homomorphism in cohomologies
$$\overline{w} : H^*(\Omega^*_G(M)) \to H^*(M \times_G P)$$ is independent of the connection in $P$.
Following \cite[Th. 2.5.1, Pr. 2.5.5]{Guillemin}, we can choose finite dimensional manifolds $EG_k$ with free $G$ actions, and equivariant inclusions $EG_k \to EG_{k+1}$ in such a way that $EG=\lim_\to EG_k$ becomes a model for the universal $G$ principal bundle with $EG$ contractible.
Then we have that $\Omega^*(EG) = \lim_{\leftarrow} \Omega^*(EG_k)$ and that this complex is acyclic. Moreover, the 
Chern-Weil map for each $k$ 
$$\overline{w}_k : \Omega^*_G(M) \to \Omega^*(M \times_G EG_k)$$
induces a map
\begin{eqnarray}
\label{Chern-Weil map Borel} \overline{w} : \Omega^*_G(M) \to \Omega^*(M \times_G EG)=\lim_\leftarrow \Omega^*(M \times_G EG_k)
\end{eqnarray}
which becomes a quasi-isomorphism of complexes and therefore an isomorphism of cohomologies
$$H^*(\Omega^*_G(M)) \cong H^*(M\times_G EG).$$

Let us show that this result can be generalized to the twisted case.

\begin{theorem} \label{Theorem Twisted Chern-Weil}
Let $\bf{H}$ be a closed equivariant 3-form in $\Omega^3_G(M)$ and consider $H_k:=\overline{w}_k(\bf{H})$ and $H =\lim_\leftarrow H_k$.
Then the twisted Chern-Weil homomorphisms
$$\phi_k : (\widehat{\Omega}^\bullet_G(M), d_{\bf{H}}) \to (\Omega^\bullet(M\times_G EG_k), d_{H_k})$$
induce a homomorphism
$$\phi : (\widehat{\Omega}^\bullet_G(M), d_{\bf{H}}) \to (\Omega^\bullet(M\times_G EG),d_H) :=\lim_{\leftarrow k}(\Omega^\bullet(M\times_G EG_k), d_{H_k})$$
which induces an isomorphism in twisted cohomologies
$$\phi : H^\bullet(\widehat{\Omega}^\bullet_G(M), {\bf{H}}) \cong H^\bullet(M\times_G EG, H):=H^\bullet(\Omega^\bullet(M\times_G EG),d_H).$$
\end{theorem}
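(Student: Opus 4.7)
The plan is to extend the classical untwisted Chern-Weil quasi-isomorphism to the $\mathbf{H}$-twisted setting via a comparison of spectral sequences. First I would define $\phi_k$ as the natural extension of the classical Chern-Weil homomorphism $\overline{w}_k$ to the completed Weil algebra $\widehat{\Omega}^\bullet_G(M)=(\Omega^\bullet(M)\otimes\widehat{W}(\gr g))_{bas}$. The extension is well-defined in spite of the $\mathfrak{a}$-adic completion: since $\overline{w}_k$ sends each generator $\Omega^a$ of $\widehat{S}(\gr g^*)$ to a curvature 2-form on the finite-dimensional manifold $EG_k$, any formal power series in the $\Omega^a$'s becomes a polynomial of bounded degree after applying $\overline{w}_k$. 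The chain-map identity $\phi_k\circ d_{\mathbf{H}} = d_{H_k}\circ\phi_k$ then follows at once from the fact that $\overline{w}_k$ is a homomorphism of differential graded algebras and that $H_k = \overline{w}_k(\mathbf{H})$, since $\phi_k(d\omega-\mathbf{H}\wedge\omega)=d\phi_k(\omega)-H_k\wedge\phi_k(\omega)$.

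To assemble the $\phi_k$ into the single map $\phi$, I would choose compatible connections on the tower $EG_k\hookrightarrow EG_{k+1}$, which can always be arranged by pulling back a fixed connection from $EG_{k+1}$. Under this choice the Chern-Weil maps commute with the restriction pullbacks $\Omega^\bullet(M\times_G EG_{k+1})\to\Omega^\bullet(M\times_G EG_k)$, so the extensions $\phi_k$ form a compatible inverse system and induce $\phi=\lim_\leftarrow\phi_k$ into $(\Omega^\bullet(M\times_G EG), d_H)$; the compatibility of the twisting forms $H_k$ along the tower is automatic since $H_k$ is the pullback of $H_{k+1}$.

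For the quasi-isomorphism I would compare the spectral sequences associated to the decreasing filtration by total form degree $F^p=\{\omega:|\omega|\geq p\}$ on both sides. Because $\mathbf{H}$ and $H_k$ have fixed positive degree $3$, both twisted differentials preserve $F^\bullet$, and the induced differential on the associated graded is simply the untwisted $d$. The $E_2$-pages therefore compute (the appropriately completed versions of) ordinary equivariant cohomology $H^*_G(M)$ on the source and $H^*(M\times_G EG)$ on the target, which the untwisted Chern-Weil quasi-isomorphism (\ref{Chern-Weil map Borel}) recalled just before the statement identifies via the map induced by $\phi$. Since $\phi$ is an algebra map sending $[\mathbf{H}]$ to $[H]$, it intertwines the higher differentials of the two spectral sequences, which are essentially multiplication by the class of the twisting form. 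A standard spectral-sequence comparison theorem then yields an isomorphism on the $E_\infty$-pages.

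The main technical obstacle lies in the simultaneous presence of two completion procedures: the $\mathfrak{a}$-adic completion on the source and the inverse limit over $k$ on the target. One must verify that both filtrations $F^\bullet$ are complete so that the two spectral sequences converge strongly to the respective twisted cohomologies, and that cohomology commutes with the inverse limit on the target, which amounts to a Mittag-Leffler condition on the tower $\{H^\bullet(M\times_G EG_k,H_k)\}$. Completeness on the source follows from the fact that every element of $\widehat{\Omega}^\bullet_G(M)$ decomposes uniquely into finite-dimensional homogeneous pieces; on the target it holds level by level because each $EG_k$ is finite-dimensional. The Mittag-Leffler condition is standard given the surjectivity of the pullbacks in each fixed form degree. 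Once these homological points are settled, the twisted quasi-isomorphism follows formally from its untwisted counterpart.
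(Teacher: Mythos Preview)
Your approach is essentially the same as the paper's: filter both complexes by total form degree $F^p$, observe that the associated graded differential is the untwisted $d$, identify the $E_2$-pages via the classical Chern-Weil quasi-isomorphism, and conclude by a spectral-sequence comparison theorem once the filtrations are known to be exhaustive and complete.

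The one place where your outline is imprecise is the completeness argument on the target. You write that completeness ``holds level by level because each $EG_k$ is finite-dimensional,'' but what must be shown complete is the filtration on the \emph{inverse limit} complex $\lim_{\leftarrow k}\Omega^\bullet(M\times_G EG_k)$ itself; completeness of each term does not formally pass to the limit. The paper handles this by proving directly that the comparison map
\[
\psi:\ \lim_{\leftarrow k}\Omega^\bullet(M_k)\ \longrightarrow\ \lim_{\leftarrow p}\lim_{\leftarrow k}\Omega^\bullet(M_k)/F^p\Omega^\bullet(M_k)
\]
is an isomorphism, the key observation being that for $p>\dim(M\times_G EG_k)$ the quotient by $F^p$ is trivial, so from a compatible family $\{\alpha_{p,k}\}$ one can extract an honest element of the inverse limit by taking the stabilized components $\beta_k=\alpha_{\dim M_k+1,k}$. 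This is the genuine content behind your phrase ``level by level.'' Your Mittag-Leffler remark about the tower $\{H^\bullet(M\times_G EG_k,H_k)\}$ is a red herring here: the target cohomology is \emph{defined} as the cohomology of the inverse-limit complex, so no commutation of cohomology with $\lim_\leftarrow$ is required for the statement.
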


\begin{proof}
Even though the complexes $\widehat{\Omega}^\bullet_G(M)$ and $\Omega^\bullet(M\times_G EG)$ are $\integer_2$
graded, we could use the $\integer$-grading of their untwisted versions to define the filtration $F^p\widehat{\Omega}^\bullet_G(M)$
and $F^p\Omega^\bullet(M\times_G EG)$ of all forms of degree greater or equal than $p$. 
The map $\phi$ becomes a homomorphism of filtered complexes and therefore it gives rise to a map of spectral sequences with $p$-th term
$\phi_p: E_p^{*,*}\to \overline{E}_p^{*,*}$.

The first terms of these spectral sequences are $$E_1^{*,*}= (\Omega^*_G(M),d) \ \ \ \mbox{and} \ \ \ \ \overline{E}_1^{*,*}=(\Omega^*(M\times_G EG),d)$$ and $\phi_1$ is simply the Chern-Weil map $\overline{w}$  of (\ref{Chern-Weil map Borel}). By the equivariant de Rham theorem the second terms
become isomorphic
$$\phi_2: E_2^{*,*}= H^*(\Omega^*_G(M)) \stackrel{\cong}{\to} \overline{E}_2^{*,*}= H^*(M\times_G EG),$$
then by Theorem 3.9 of \cite{McCleary} if the filtrations are exhaustive and complete we would have that $\phi$ induce an isomorphism
of twisted cohomologies
$$\phi: H^\bullet(\widehat{\Omega}^\bullet_G(M), {\bf H}) \cong H^\bullet(M\times_G EG , H).$$

Let us finish the proof by showing that both filtrations are complete. The filtrations are exhaustive because the filtrations were defined by the degree.

The twisted cohomology $H^\bullet(\widehat{\Omega}^\bullet_G(M), {\bf H})$ is complete because the twisted complex is complete; this follows from the following equalities
$$\Omega(M)\otimes \widehat{W}(\gr{g}) = \lim_\leftarrow \Omega(M)\otimes \widehat{W}(\gr{g})/F^p\widehat{S}(\gr{g}^*)=\lim_\leftarrow \Omega(M)\otimes \widehat{W}(\gr{g})/(F^p\Omega(M)\otimes\widehat{W}(\gr{g})).$$

For the twisted cohomology $H^\bullet(M\times_G EG , H)$ we will also show its completeness by showing it at the level of the twisted complex. For this we just need to show that the induced map
$$\psi: \lim_{\leftarrow k} \Omega^\bullet(M_k) \to \lim_{\leftarrow p} \lim_{\leftarrow k} \Omega^\bullet(M_k)/F^p\Omega^\bullet(M_k)$$
is an isomorphism, where we have denoted $M_k=M\times_G EG_k$ to simplify the notation. As the filtration is exhaustive, then the map $\psi$ is injective. Now let us show that is surjective.

Any element $$\alpha \in \lim_{\leftarrow p} \lim_{\leftarrow k} \Omega^\bullet(M_k)/F^p\Omega^\bullet(M_k)$$ consists  of a sequence
$\alpha= \{\alpha_p \}_p $ where $\alpha_p \in \lim_{\leftarrow k} \Omega^\bullet(M_k)/F^p\Omega^\bullet(M_k)$ and $\alpha_{p+1} \mapsto \alpha_p$. Each $\alpha_p$ consists also of a sequence $\alpha_p=\{\alpha_{p,k}\}_k$ where $\alpha_{p,k} \in \Omega^\bullet(M_k)/F^p\Omega^\bullet(M_k)$ and $\alpha_{p,k+1} \mapsto \alpha_{p,k}$. Therefore we have that the $\alpha$'s satisfy
$$\xymatrix{
\alpha_{p+1,k+1} \ar@{|->}[r] \ar@{|->}[d] & \alpha_{p+1,k} \ar@{|->}[d] \\
\alpha_{p,k+1} \ar@{|->}[r]  & \alpha_{p,k}.
}
$$

Note that if $d(k)$ is the dimension of $M_k=M\times_GEG_k$ then for $p > d(k)$ we have that
$$\Omega^\bullet(M_k)/F^p\Omega^\bullet(M_k) = \Omega^\bullet(M_k),$$
and therefore for all $p>d(k)$, $\alpha_{p,k}=\alpha_{d(k)+1,k}$.

So, define $\beta_k :=\alpha_{d(k)+1,k}$ and consider the element $$\beta=\{\beta_k\}_k \in \lim_{\leftarrow k} \Omega^\bullet(M_k).$$
We claim that $\psi(\beta) = \alpha$. For this let us see that for $p$ fixed $\beta \mapsto \alpha_p$, and this easy to check because for  $d(k) <p$ then 
$\alpha_{p,k}=\alpha_{d(k)+1,k}=\beta_k$, and for $d(k) \geq p$ then $\beta_k =\alpha_{d(k)+1,k} \mapsto \alpha_{p,k}$.

Then as the map $\psi$ is an isomorphism the complex $\Omega^\bullet(M\times_G EG)$ is complete and therefore the twisted cohomology
$H^\bullet(M\times_G EG, H)$ is complete.

\end{proof}

We have the following useful corollaries:

\begin{cor}
If the equivariant cohomology class of twisting form $\bf H$ is zero, then
$$H^{0}(\widehat{\Omega}^\bullet_G(M), {\bf{H}}) \cong \prod_{i=0}^\infty H^{2i}_G(M) \ \ \ {\mbox{and}} \ \ \ 
H^{1}(\widehat{\Omega}^\bullet_G(M), {\bf{H}}) \cong \prod_{i=0}^\infty H^{2i+1}_G(M)$$
where $H^{*}_G(M)$ is the equivariant cohomology of $M$. 
\end{cor}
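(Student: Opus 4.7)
The plan is to conjugate away the twisting using $\mathbf{H} = d\alpha$, and then compute the untwisted $\integer_2$-folded cohomology of the completed Weil complex, which decomposes as a product over $\integer$-degrees.

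Since $\mathbf{H}$ is equivariantly exact, I choose $\alpha \in \Omega^2_\gr{g}(M)$ with $d\alpha = \mathbf{H}$; this is possible because the quasi-isomorphism $j$ of (\ref{quasiiso j}) identifies the equivariant class of $\mathbf{H}$ in the Weil model with its class in $H^*_G(M)$, which vanishes by hypothesis. Consider the operator $e^{-\alpha}\wedge$ acting on $\widehat{\Omega}^\bullet_\gr{g}(M)$. The formal series $e^{-\alpha} = \sum_{k\geq 0}(-\alpha)^k/k!$ has $k$-th term of $\integer$-degree $2k$, so it converges as an element of $\widehat{W}(\gr{g})$; this is precisely the purpose of completing the symmetric algebra. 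Since $\alpha$ is basic and of even degree, the operator preserves the basic subcomplex and is invertible with inverse $e^{\alpha}\wedge$. A short Leibniz computation using $d(e^{-\alpha}) = -d\alpha\wedge e^{-\alpha} = -\mathbf{H}\wedge e^{-\alpha}$ yields
\[d\circ (e^{-\alpha}\wedge) = (e^{-\alpha}\wedge)\circ(d - \mathbf{H}\wedge) = (e^{-\alpha}\wedge)\circ d_{\mathbf{H}},\]
so multiplication by $e^{-\alpha}$ defines an isomorphism of $\integer_2$-graded complexes $(\widehat{\Omega}^\bullet_\gr{g}(M), d_{\mathbf{H}}) \to (\widehat{\Omega}^\bullet_\gr{g}(M), d)$, inducing an isomorphism on cohomology.

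It remains to identify the untwisted cohomology. Each $\integer$-graded piece of $W(\gr{g})$ is finite-dimensional, and the $\gr{a}$-adic completion $\widehat{S}(\gr{g}^*)$ replaces the direct sum over symmetric degrees by a direct product, so as $\integer_2$-graded complexes
\[\widehat{\Omega}^\bullet_\gr{g}(M) \;\cong\; \prod_{n\geq 0}\Omega^n_\gr{g}(M),\]
with $\integer_2$-grading by the parity of $n$. Because $d$ preserves the underlying $\integer$-grading, it acts coordinate-wise on this product, and cohomology commutes with arbitrary direct products of complexes; hence $H^\bullet(\widehat{\Omega}^\bullet_\gr{g}(M), d) \cong \prod_{n\geq 0} H^n_G(M)$. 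Restricting to the even and odd $\integer_2$-components yields the claimed formulas.

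The main potential obstacle is the $\integer_2$ versus $\integer$ grading bookkeeping: one must check that $\widehat{S}(\gr{g}^*)$ converts the $\integer$-graded direct sum into the $\integer_2$-graded direct product, and that cohomology passes through this product (which holds because $d$ respects the $\integer$-grading). Both are routine once the identifications are explicit, so the substantive step is the conjugation by $e^{-\alpha}$, which parallels the ungraded twisted cohomology story but relies crucially on the completion to make $e^{-\alpha}$ a well-defined element of $\widehat{\Omega}^\bullet_\gr{g}(M)$.
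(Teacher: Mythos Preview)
Your proof is correct and takes a genuinely different route from the paper's. The paper first invokes (without details) that exactness of $\mathbf{H}$ gives $H^\bullet(\widehat{\Omega}^\bullet_G(M),\mathbf{H})\cong H^\bullet(\widehat{\Omega}^\bullet_G(M),0)$, and then computes the untwisted side by applying the twisted Chern--Weil isomorphism of Theorem~\ref{Theorem Twisted Chern-Weil}: it passes to the finite-dimensional approximations $M\times_G EG_k$, uses that their $0$-twisted cohomology is just ordinary cohomology folded to $\integer_2$-grading, and then takes the inverse limit over $k$. By contrast, you supply the $e^{-\alpha}$ conjugation explicitly (which is precisely the missing justification for the paper's first step, and where the completion is essential), and then compute $H^\bullet(\widehat{\Omega}^\bullet_G(M),0)$ directly from the product decomposition $\widehat{\Omega}^\bullet_G(M)\cong\prod_n\Omega^n_G(M)$ without ever leaving the Weil model. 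Your argument is more elementary and self-contained, avoiding the spectral-sequence comparison underlying Theorem~\ref{Theorem Twisted Chern-Weil}; the paper's route, on the other hand, ties the result to the topological Borel picture, which is the thematic point of \S\ref{chern}.

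Two minor remarks on presentation: (i) $e^{-\alpha}$ lives in $\widehat{\Omega}^\bullet_G(M)$ rather than in $\widehat{W}(\gr g)$ alone, since $\alpha$ has a form component on $M$; convergence still holds because the $\Omega^*(M)\otimes\Lambda(\gr g^*)$ part of $\alpha$ is nilpotent and the remaining $S(\gr g^*)$-part is handled by the $\gr a$-adic completion. (ii) The phrase ``acts coordinate-wise'' is slightly loose, since $d$ shifts $\integer$-degree by one; the correct statement, which you effectively use, is that a closed (resp.\ exact) element of $\prod_n\Omega^n_G(M)$ is precisely a sequence of closed (resp.\ exact) homogeneous elements, so cohomology of the product is the product of cohomologies.
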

\begin{proof}
Because $\bf H$ is cohomologous to zero we  have that
$$H^{\bullet}(\widehat{\Omega}^\bullet_G(M), {\bf{H}})\cong H^{\bullet}(\widehat{\Omega}^\bullet_G(M), 0)$$
and by the Chern-Weill homomorphism we have that
$$H^{\bullet}(\widehat{\Omega}^\bullet_G(M), 0) \cong \lim_{\leftarrow k}H^\bullet(M\times_G EG_k, 0)$$
as the cohomology commutes with the inverse limit. \\
The spaces $M\times_G EG_k$ are finite dimensional manifolds, then their twisted cohomologies twisted by zero are isomorphic to their cohomologies but  $\integer_2$ graded, i.e.
$$H^{0}(M \times_G EG_k, 0) \cong \bigoplus_{i=0}^\infty H^{2i}(M\times_G EG_k) $$ $$ H^{1}(M \times_G EG_k, 0) \cong \bigoplus_{i=0}^\infty H^{2i+1}(M\times_G EG_k) .$$
Taking the inverse limit we have then
$$H^{0}(M\times_G EG,0) \cong \prod_{i=0}^\infty H^{2i}(M\times_G EG)$$
$$H^{1}(M\times_G EG,0) \cong \prod_{i=0}^\infty H^{2i+1}(M\times_G EG);$$
the result now follows from the Chern-Weil homomorphism for the untwisted case.
\end{proof}

\begin{cor} If $G$ acts freely on M, then the twisted equivariant
cohomology is isomorphic to the
 twisted cohomology of $M/G$.
\end{cor}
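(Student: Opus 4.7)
The plan is to apply Theorem 4.1 (the twisted Chern--Weil theorem) and then identify the twisted cohomology of the Borel construction $M\times_G EG$ with that of $M/G$. Theorem 4.1 gives
$$H^\bullet(\widehat{\Omega}^\bullet_G(M),{\bf H}) \;\cong\; H^\bullet(M\times_G EG, H),$$
so the task reduces to showing $H^\bullet(M\times_G EG, H)\cong H^\bullet(M/G, H')$ for an appropriate closed 3-form $H'$ on $M/G$.

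When $G$ acts freely on $M$, each projection $\pi_k\colon M\times_G EG_k \to M/G$ is a fibre bundle with fibre $EG_k$, whose connectivity tends to infinity with $k$. In particular $\pi_k^*\colon H^j(M/G)\to H^j(M\times_G EG_k)$ is an isomorphism for $k$ large enough compared to $j$, and the classical identification $H^*_G(M)\cong H^*(M/G)$ follows. Under it the equivariant class $[{\bf H}]\in H^3_G(M)$ corresponds to a class $[H']\in H^3(M/G)$; choosing a representative of this class, I arrange $H - \pi^*H' = dB$ on $M\times_G EG$ for some $B\in \Omega^2(M\times_G EG)$ (again, using the inverse-limit setup of Theorem 4.1).

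With this choice, conjugation by $e^B$ gives an isomorphism of $\integer_2$-graded twisted complexes $(\Omega^\bullet(M\times_G EG), d_H)\cong (\Omega^\bullet(M\times_G EG), d_{\pi^*H'})$, while $\pi^*$ intertwines $d_{H'}$ with $d_{\pi^*H'}$. To deduce that the latter map is a quasi-isomorphism after twisting, I would recycle the spectral sequence argument from the proof of Theorem 4.1: filter both sides by form-degree so that at the $E_2$-page the map reduces to the ordinary pullback $\pi^*\colon H^*(M/G)\to H^*(M\times_G EG)$, which is an isomorphism by the connectivity remark above, and then invoke the same exhaustiveness and completeness checks as in Theorem 4.1 to conclude that $\pi^*$ induces an isomorphism on twisted cohomology.

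The main obstacle is the final step, namely justifying that $\pi^*$ remains a quasi-isomorphism on twisted cohomology in the inverse-limit setting rather than only on each finite-dimensional $M\times_G EG_k$. Once the filtration-plus-completeness machinery from the proof of Theorem 4.1 is reused, both the $B$-field trivialisation $H\sim \pi^*H'$ and the pullback equivalence $\pi^*$ become essentially formal, and composing the three identifications $H^\bullet(\widehat{\Omega}^\bullet_G(M),{\bf H})\cong H^\bullet(M\times_G EG,H)\cong H^\bullet(M\times_G EG,\pi^*H')\cong H^\bullet(M/G,H')$ proves the corollary.
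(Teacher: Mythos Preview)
Your argument is correct, and it shares the first step with the paper: both invoke the twisted Chern--Weil isomorphism to replace $H^\bullet(\widehat{\Omega}^\bullet_G(M),{\bf H})$ by $H^\bullet(M\times_G EG, H)$. The second half, however, differs. The paper dispatches the remaining identification in one line: since $G$ acts freely on $M$, the spaces $M\times_G EG$ and $M/G$ are homotopy equivalent, and twisted cohomology is a cohomology theory (citing \cite{Atiyah-Segal}), hence homotopy invariant; therefore $H^\bullet(M\times_G EG, H)\cong H^\bullet(M/G, H')$. You instead build this isomorphism by hand: you match the twisting classes through $H^3_G(M)\cong H^3(M/G)$, absorb the difference $H-\pi^*H'$ via a $B$-field, and then rerun the degree-filtration spectral sequence and completeness checks from the proof of the twisted Chern--Weil theorem to show that $\pi^*$ is a quasi-isomorphism of twisted complexes. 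Your route is longer but entirely self-contained --- it avoids appealing to an external homotopy-invariance statement in the inverse-limit setting and makes the correspondence of the twisting forms explicit --- whereas the paper's route is a two-sentence black-box invocation of \cite{Atiyah-Segal}.
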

\begin{proof} This follows from the fact that twisted cohomology is a cohomology
theory (see \cite{Atiyah-Segal}) and the fact that $M\times_G EG $ and $M/G$ are homotopically equivalent. 
Therefore the
twisted cohomology of $M\times_G EG$  is
isomorphic to the twisted cohomology of $M/G$.
Note that in this case the twisted equivariant cohomology is finitely generated. This is because $M/G$ is a manifold and its twisted cohomology is finitely generated.  
\end{proof}

\begin{cor} \label{cohomology axioms}The twisted equivariant cohomology satisfies all the
axioms of a
 cohomology theory.
\end{cor}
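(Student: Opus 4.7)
The plan is to reduce every axiom for the twisted equivariant theory to the corresponding property of the twisted de~Rham cohomology of the (finite dimensional) Borel construction, and then invoke the fact that twisted cohomology of a space is a cohomology theory in the sense of Atiyah--Segal. Concretely, Theorem~\ref{Theorem Twisted Chern-Weil} provides a natural isomorphism
\[
H^\bullet_G(M,\mathbf{H}) \;\cong\; H^\bullet\bigl(M \times_G EG,\, H\bigr),
\]
so the corollary follows once we verify that the right hand side, viewed as a functor of the pair $(M,\mathbf{H})$, enjoys the usual axioms and that the isomorphism is natural with respect to equivariant maps.

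First I would spell out functoriality. Given an equivariant smooth map $f : M \to N$ and a closed equivariant $3$-form $\mathbf{H}_N$ on $N$, pullback yields $\mathbf{H}_M := f^*\mathbf{H}_N$ and a chain map $f^* : (\widehat{\Omega}^\bullet_G(N), d_{\mathbf{H}_N}) \to (\widehat{\Omega}^\bullet_G(M), d_{\mathbf{H}_M})$. Via the Chern--Weil map this is intertwined with the pullback $(f\times_G\mathrm{id})^*$ on the Borel constructions, because the universal connection used to define $\overline{w}_k$ is natural in $M$. Thus the isomorphism of Theorem~\ref{Theorem Twisted Chern-Weil} is a natural transformation of contravariant functors.

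Next I would transfer the axioms one at a time. For homotopy invariance, an equivariant homotopy $F : M \times I \to N$ induces a homotopy $F \times_G \mathrm{id} : (M\times_G EG) \times I \to N\times_G EG$, and homotopy invariance of ordinary twisted cohomology (Atiyah--Segal) gives equality of the two induced maps on $H^\bullet(\cdot, H)$; through the natural isomorphism above this yields homotopy invariance of $H^\bullet_G(\cdot, \mathbf{H})$. For the long exact sequence of an equivariantly closed pair $(M,A)$, one replaces the pair by $(M\times_G EG,\; A\times_G EG)$ at each level $k$, applies the long exact sequence for twisted cohomology of pairs, and then passes to the inverse limit; the Mittag--Leffler condition holds because for fixed $p$ the filtration quotients $\Omega^\bullet/F^p\Omega^\bullet$ stabilize in $k$ (this is exactly the argument used inside the proof of Theorem~\ref{Theorem Twisted Chern-Weil} to establish completeness). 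Excision and the Mayer--Vietoris sequence are handled identically, since both properties hold on each $M \times_G EG_k$ and are preserved by the inverse limit thanks to the same finite-dimensionality argument.

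The main obstacle I expect is the inverse-limit step: one needs to argue that taking $\lim_{\leftarrow k}$ commutes with the relevant exact sequences of twisted complexes. This is not automatic for $\mathbb{Z}_2$-graded inverse systems, but the dimension-stabilization observation from the proof of Theorem~\ref{Theorem Twisted Chern-Weil} (namely that $\Omega^\bullet(M_k)/F^p\Omega^\bullet(M_k) = \Omega^\bullet(M_k)$ for $p > \dim M_k$) shows that each inverse system we meet is essentially constant modulo any fixed filtration degree, so $\varprojlim^1$ vanishes on the relevant short exact sequences and exactness is preserved. Once this is in place, every axiom descends along the natural isomorphism of Theorem~\ref{Theorem Twisted Chern-Weil} and the corollary follows.
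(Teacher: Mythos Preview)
Your approach is essentially the same as the paper's: both reduce the question, via the Chern--Weil isomorphism of Theorem~\ref{Theorem Twisted Chern-Weil}, to the fact that twisted de~Rham cohomology of $M\times_G EG$ is a cohomology theory (Atiyah--Segal). The paper's proof is a two-line invocation of this idea and does not spell out the naturality or inverse-limit checks you carefully supply; your additional care is not misplaced, but it goes well beyond what the authors record.
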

\begin{proof} Because of the Chern-Weil isomorphism, the twisted equivariant
cohomology is isomorphicm to the twisted cohomology of the space $M\times_G EG$. Because the twisted cohomology satisfies all the axioms of
cohomology, the result follows.
\end{proof}

\begin{rem}
Corollary \ref{cohomology axioms} together with Theorem \ref{isomorphism to overline HH} imply that extended equivaraint cohomology for compact Lie groups satisfy the properties of a cohomology theory as are the Mayer-Vietoris long exact sequence, excision and the Thom isomorphism. This was also proved in \cite{HuUribe} using equivariant methods.
\end{rem}

\begin{example}
Let us calculate again the twisted equivariant cohomology of Example \ref{Example circle circle} using the twisted Chern-Weil homomorphism of Theorem \ref{Theorem Twisted Chern-Weil}.
Taking $S^{2k+1} \subset \complex^{k+1}$ as the set $ES^1_k$, then we have that $M \times_G EG_k = S^1 \times \complex P^k$.
The cohomology of $S^1 \times \complex P^k$ is 
$$H^*(S^1 \times \complex P^k)=\Lambda[d\theta] \otimes \real[\Omega]/\langle \Omega^{k+1} \rangle$$ and
the three form induced by the Chern-Weil map is $d\theta \Omega$.

As both manifolds $S^1$ and $\complex P^k$ are formal, the twisted cohomology could be calculated from the cohomology
of $S^1 \times \complex P^k$ and the operator $-d\theta \Omega \wedge$, i.e.
$$H_{S^1}^\bullet(S^1 \times \complex P^k; d\theta \Omega ) = H^\bullet \left( H^\bullet(S^1 \times \complex P^k; -d\theta \Omega \wedge )\right).$$
Then we have that
$$H_{S^1}^\bullet(S^1 \times \complex P^k; d\theta \Omega )= \real\langle d\theta \rangle \oplus \real \langle \Omega^k \rangle.$$

Now, if we take the inverse limit of these cohomologies we get
$$\lim_{\leftarrow k}  H_{S^1}^\bullet(S^1 \times \complex P^k; d\theta \Omega ) = \lim_{\leftarrow k} \real\langle d\theta \rangle \oplus \real \langle \Omega^k \rangle = \real\langle d\theta \rangle,$$
which implies that
$$H_{S^1}^\bullet(S^1 \times \complex P^\infty; d\theta \Omega ) =  \real\langle d\theta \rangle$$
thus agreeing with the calculations done in Example  \ref{Example circle circle}.
\end{example}

\section{Hamiltonian actions on generalized complex manifolds}\label{hamilton}

In this last section we will show how to induce a generalized complex structure on $M \times_G P$ whenever we have a Hamiltonian $G$ action on the 
generalized complex manifold $M$ and $P \to Q$ is a $G$-principal bundle over a generalized complex manifold $Q$. 

Let us start by recalling the definitions and theorems of generalized complex geometry that will be used in what follows (see \cite{Cavalcanti, Gualtieri, Bursztyn, Hu1, Stienon, Lin}).

\begin{definition}
A generalized complex manifold is a manifold $M$ together with one of the following equivalent structures:
\begin{itemize}
\item An endomorphism $\JJB : \TTB M \to \TTB M$ such that $\JJB^2=-1$, orthonormal with respect to the inner product $\langle , \rangle$ and such that
the $\sqrt{-1}$-eigenbundle  $L< \TTB M \otimes \complex$ is involutive with respect to the $H$-twisted Courant bracket i.e. $[L,L]_H \subset L$.
\item A maximal isotropic subbundle $L< \TTB M \otimes \complex$ which is involutive with respect to the $H$-twisted Courant bracket and such that $L \cap \overline{L}=\{0\}$.
\item A line bundle $U$ in $\wedge^* T^*M \otimes \complex$ generated locally by a form of the form $\rho = e^{B + \sqrt{-1}\omega} \Omega$ such that $\Omega$ is a decomposable complex form, $B$ and $\omega$ are real 2-forms and $\Omega \wedge \overline{\Omega} \wedge \omega^{n-k} \neq 0$
at the points where ${\rm{deg}}(\Omega)=k$; together with a section $\gr{X}=X + \xi \in \Gamma(\TTB M \otimes \complex)$ such that 
$$d_H \rho = \iota_{\gr{X}} \rho =\iota_X \rho + \xi \wedge \rho.$$
\end{itemize}
\end{definition}

The equivalence of these three definitions can be found in Gualtieri's thesis \cite{Gualtieri}. Let us only note that the elements of $\TTB M \otimes \complex$ that annihilate the form  $\rho$ of the third definition define the subbundle $L$. The line bundle $U$ is called the \emph{canonical line bundle} of the generalized complex structure.

\begin{definition}
We say that a Lie group $G$ acts on the generalized complex manifold $(M,\JJB)$ if the group $G$ acts by extended symmetries on $\TTB M$ and if the generalized complex structure $\JJB$ is preserved by the action.\\

The action is {\bf{Hamiltonian}} if there is an equivariant moment map $\mu: M \to \gr{g}^*$ such that $\JJB(d\mu_a)=X_a+\xi_a$ for all $a \in \gr{g}$ and $\mu_a(m) := \mu(m)(a)$.
\end{definition}

The condition on the moment map could be rephrased as
$$\iota_{\gr{X}_a} \rho =\iota_{X_a} \rho + \xi_a \wedge \rho = - \sqrt{-1} d \mu_a \wedge \rho$$
whenever $\rho$ is a local section of the canonical line bundle.

Let us now state the main theorem of this section.

\begin{theorem}
Let $G$ be a compact Lie group, $P \to Q$ a $G$-principal bundle such that the base $Q$ is compact and is endowed with a generalized complex structure,  and a Hamiltonian action of $G$ on the generalized complex manifold $M$ . Then the manifold $M \times_G P$ admits a generalized complex structure. 
\end{theorem}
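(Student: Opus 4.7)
The plan is to adapt the Sternberg minimal coupling to the generalized complex category, using a connection on $P$ and the moment map to twist the pullback of $\rho_M\wedge\rho_Q$ into a basic pure spinor on $M\times_G P$. First, by Theorem~\ref{isomorphism to overline HH}, I would normalize the splitting of $\TTB M$ so that $G$ acts purely by diffeomorphisms, with associated equivariantly closed $3$-form $\overline{\HH}=\overline H+\Omega^a\overline\xi_a$ on the Cartan complex of $M$. Letting $\rho_M$ locally generate the canonical line bundle of $(M,\JJB)$, the Hamiltonian condition $\JJB(d\mu_a)=X_a+\overline\xi_a$ becomes
$$\iota_{X_a}\rho_M+\overline\xi_a\wedge\rho_M=-i\,d\mu_a\wedge\rho_M,\qquad a\in\gr g,$$
while $\rho_Q$ locally generates the canonical line bundle of $(Q,H_Q)$.

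Next, choose a $G$-invariant connection $\theta=\theta^a\otimes a\in(\Omega^1(P)\otimes\gr g)^G$ on $P\to Q$, with curvature $F$. The Chern-Weil homomorphism of Section~\ref{chern} applied to the bundle $P$ with connection $\theta$ sends $\overline{\HH}$ to a closed $3$-form $\overline{w}(\overline{\HH})\in\Omega^3(M\times_G P)$, and the twisting used on the Courant bracket of $\TTB(M\times_G P)$ will be
$$H\;:=\;\overline{w}(\overline{\HH})+\pi^*H_Q,$$
where $\pi:M\times_G P\to Q$ is the natural projection. As a candidate pure spinor on $M\times P$, take
$$\tilde\rho\;:=\;\rho_M\wedge e^{-i\,\mu_a\theta^a}\wedge\rho_Q,$$
where $\rho_M$ and $\rho_Q$ are pulled back via the obvious projections. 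Since $\mu_a\theta^a$ is a $1$-form, the exponential collapses to $1-i\mu_a\theta^a$ and $\tilde\rho=\rho_M\rho_Q-i\mu_a\,\rho_M\wedge\theta^a\wedge\rho_Q$.

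To see that $\tilde\rho$ descends to $M\times_G P$, observe that equivariance is automatic from the $G$-invariance of every factor, while using $\iota_{X_a^P}\theta^b=\delta^b_a$, the identity $\iota_{X_a^P}\rho_Q=0$ coming from the horizontality of $\rho_Q$ under $P\to Q$, and the Hamiltonian identity above, a direct computation shows that contraction of $\tilde\rho$ with the extended diagonal fundamental element $X_a+X_a^P+\overline\xi_a$ vanishes for every $a\in\gr g$. Thus $\tilde\rho$ is $G$-basic and descends to a local polyform $\rho$ on $M\times_G P$. Its non-degeneracy as a pure spinor follows from the non-degeneracy of $\rho_M$ and $\rho_Q$ together with the invertibility of the polynomial correction $1-i\mu_a\theta^a$.

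The main obstacle is the integrability condition $d_H\rho=\iota_{\gr Y}\rho$ for some section $\gr Y\in\Gamma(\TTB(M\times_G P)\otimes\complex)$. Expanding $d\tilde\rho$, the pieces coming from the integrability of $\rho_M$ on $(M,\overline H)$, the integrability of $\rho_Q$ on $(Q,H_Q)$, and the differential
$$d(\mu_a\theta^a)=d\mu_a\wedge\theta^a+\mu_a\bigl(F^a-\tfrac12 f^a_{bc}\theta^b\theta^c\bigr)$$
must, after restriction to basic forms, reorganize themselves---via the explicit formula ${\bf H}=j(\overline{\HH})$ of Section~\ref{twisted}---into $-H\wedge\tilde\rho$ plus a Clifford contraction with a section $\gr Y$ assembled from the integrability sections for $M$ and $Q$. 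The Hamiltonian condition produces the $d\mu_a\wedge\theta^a$ cancellation, the equivariance of $\overline{\HH}$ absorbs the structure-constant term, and the curvature $F^a$ provides exactly the piece that the Chern-Weil map in Section~\ref{chern} substitutes for $\Omega^a$. This is the precise mechanism by which, as advertised in the introduction, the twisting form on $M\times_G P$ coincides with the image of $\overline{\HH}$ under the twisted Chern-Weil homomorphism of Theorem~\ref{Theorem Twisted Chern-Weil}.
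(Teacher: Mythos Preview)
Your approach is quite different from the paper's, and it does not go through as written. The paper proceeds indirectly: it first builds a generalized complex structure on the vertical cotangent bundle $T^{*v}P=P\times_G T^*G$ (via Cavalcanti's construction, using the compactness of $Q$ and an $\epsilon$-scaling of the fibre symplectic form $\omega_A$), shows that the $G$-action there is Hamiltonian, forms the diagonal Hamiltonian $G$-space $M\times T^{*v}P$, and then invokes the generalized complex reduction theorem. The reduced space $\overline\mu^{-1}(0)/G$ is identified with $M\times_G P$, so neither the pure spinor on the quotient nor its integrability has to be verified by hand.

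Your direct minimal-coupling candidate $\tilde\rho=\rho_M\wedge e^{-i\mu_a\theta^a}\wedge\rho_Q$ has two concrete problems. First, the claimed vanishing of the extended contraction by $X_a+X_a^P+\overline{\xi}_a$ does not hold: using the Hamiltonian identity $\iota_{X_a}\rho_M+\overline{\xi}_a\wedge\rho_M=-i\,d\mu_a\wedge\rho_M$ and $\iota_{X_a^P}\theta^b=\delta^b_a$, one gets
\[
(\iota_{X_a}+\iota_{X_a^P}+\overline{\xi}_a\wedge)\tilde\rho
=-i\,d\mu_a\wedge\rho_M\wedge(1-i\mu_b\theta^b)\wedge\rho_Q
\;+\;(-1)^{|\rho_M|}(-i\mu_a)\,\rho_M\wedge\rho_Q,
\]
and the $d\mu_a$-term is not cancelled by the $\mu_a$-term. (Moreover, this extended annihilation is not the same as horizontality for the principal bundle $M\times P\to M\times_G P$; descent of a differential form requires the ordinary contraction $\iota_{X_a+X_a^P}$ to vanish, which also fails for your $\tilde\rho$.) In the symplectic Sternberg--Weinstein coupling the correction is the $2$-form $d(\mu_a\theta^a)$, not the $1$-form $\mu_a\theta^a$, so the exponent in your $\tilde\rho$ is of the wrong type to begin with.

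Second, and independently, you never use the compactness of $Q$, and your nondegeneracy claim cannot hold without it. Even in the purely symplectic case one must scale one of the pieces so that the curvature contribution in $d(\mu_a\theta^a)=d\mu_a\wedge\theta^a+\mu_a F^a-\tfrac12\mu_a f^a_{bc}\theta^b\theta^c$ does not destroy nondegeneracy; this is precisely the role of the $\epsilon$ in the paper's proof (and in Cavalcanti's Theorem~2.2). The assertion that ``nondegeneracy follows from the nondegeneracy of $\rho_M$ and $\rho_Q$ together with the invertibility of $1-i\mu_a\theta^a$'' does not address this at all. Finally, your treatment of integrability is only a sketch of what one hopes will happen; the paper's route via $T^{*v}P$ and the reduction theorem packages exactly these difficulties into results already available in the literature.
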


\begin{proof}
Let us start by choosing the splitting of Lemma \ref{overline HH} for the manifold $M$ thus defining a $G$-invariant twisting form $H_1 \in \Omega^3(M)$
and a moment map $\sigma : M \to \gr{g}^*$ with $\JJB(d\sigma_a)=X_a + \xi_a$. Let $H_2 \in \Omega^3(Q)$ be the twisting form of $Q$.

Following Weinstein's construction in \cite{Weinstein} (cp. \cite[Thm. 6.10]{McDuff}), if we consider the cotangent vertical bundle of $P$
$$T^{*v}P := P \times_G T^*G,$$
then every connection 1-form  $A$ of $P$ induces an equivariant map $\phi^A : T^{*v}P \to T^*P$ such that the two form $\omega_A := (\phi^A)^*\omega_{\rm{can}} \in \Omega^2(T^{*v}P)$ is $G$ invariant and restricts to the canonical symplectic form of the fibers of the bundle $p:T^{*v}P \to Q$
 (the fibers are isomorphic to $T^*G$). Moreover, as the map $\mu_P : T^*P \to \gr{g}^*$, $ \mu_P(p, v^*) = -L_p^*v^*$ is Hamiltonian in the usual sense, with $L^*_p: T^*_pP \to \gr{g}^*$ the dual of the linear map $L_p: \gr{g} \to T_pP$, $L_p \xi=p \cdot \xi$ then the composition
 $$\mu=\mu_P \circ \phi^A : T^{*v}P \to \gr{g}^*$$
 is a moment map for the $G$ action on the fibers of $T^{*v}P$.

If $\rho$  is the local form defining the generalized complex structure in $Q$, it was shown in the proof of Theorem 2.2 of \cite{Cavalcanti} that as $Q$ is compact there exists an $\epsilon >0$ such that the local form
$$\overline{\rho}= e^{\sqrt{-1} \epsilon \omega_A} \wedge p^*\rho$$
defines a generalized complex structure on $T^{*v}P$ with twisting form $p^*H_2$. 

The action of $G$ on $T^{*v}P$ is also Hamiltonian with respect to the generalized complex structure defined by $\overline{\rho}$  and with moment map
$\mu^\epsilon : T^{*v}P \to \gr{g}^*$, $\mu^\epsilon(\cdot):=\epsilon \mu(\cdot)$; let us see this. Let $\gr{g} \to T(T^{*v}P), \ \ a \mapsto Y_a$, be the infinitesimal action; for the action to be Hamiltonian we need the following equation to be satisfied
$$\iota_{Y_a} \overline{\rho} = -\sqrt{-1} d\mu^\epsilon_a \wedge \overline{\rho},$$
which follows from the following set of equalities
\begin{eqnarray*}
\iota_{Y_a} \overline{\rho} &=& \iota_{Y_a} (e^{-\sqrt{-1}\epsilon \omega_A }\wedge p^* \rho)\\
&=& -\sqrt{-1} \epsilon (\iota_{Y_a} \omega_A)  \wedge e^{-\sqrt{-1}\epsilon \omega_A }\wedge p^* \rho\\
&=& -\sqrt{-1} \epsilon (d \mu_a)\wedge  \overline{\rho} \\
&=& -\sqrt{-1} d\mu^\epsilon_a \wedge \overline{\rho}.
\end{eqnarray*}
Notice that the equation $\iota_{Y_a} \omega_A=d \mu_a$ is the restriction to $T^{*v}P$ of the equation
$$\iota_{\phi^A_*Y_a}\omega_{\rm{can}} = d(\mu_P)_a$$ which follows from the fact that $\mu_P$ is a moment map.

\vspace{0.5cm}

We now have that the action of $G$ is Hamiltonian in both generalized complex manifolds $T^{*v}P$ and $M$. Then we can consider the product $M \times T^{*v}P$ together the generalized complex structure induced by $M$ and $T^{*v}P$ and whose twisting form is $H_1 + p^* H_2$. 
The diagonal action of $G$ on $M \times T^{*v}P$ is also Hamiltonian (see \cite[Prop. 3.9]{Hu1}) and its moment map is $\overline{\mu}= \sigma \oplus \mu$. Because $0$ is a regular value of $\mu_P$, and therefore of $\mu$, then $0$ is a regular value of $\overline{\mu}$. The group $G$ acts freely on $T^{*v}P$ and therefore it acts freely on $\overline{\mu}^{-1}(0)$. By the reduction theorem for Hamiltonian actions on generalized complex manifolds (see \cite{Bursztyn, Lin, Hu1, Stienon}) we have that the manifold $\overline{\mu}^{-1}(0)/G$ possesses a generalized complex structure. The manifold
$\overline{\mu}^{-1}(0)$ can be easily identified with $M\times P$ and therefore the quotient $M \times_GP$ becomes a generalized complex manifold.

The twisting form for the generalized complex structure on $M \times_GP$ (following \cite[Cor. 4.7]{Hu1}) is the basic 3-form
$$H_1 + p^* H_2 + d(\theta^a \xi_a) - \frac{1}{2}d( \theta^b \theta^c \iota_c \xi_b) \in \Omega^3(M \times P)_{bas}$$
where $\theta^a, \theta^b  \dots$ are the connection 1-forms for the principal $G$-bundle $M \times P \to M \times_GP$.
\end{proof}

\begin{rem} Let us finish by noting that in the case that the manifolds $BG_k$ are symplectic (for example when $G=U(n)$ and the $BG_k$'s are the complex grassmanians) then the manifolds $M\times_G EG_k$ would acquire a generalized complex structure with twisting form 
$$H_1 + d(\theta^a \xi_a) - \frac{1}{2} d(\theta^b \theta^c \iota_c \xi_b).$$
This three form is the same one we used in Theorem \ref{Theorem Twisted Chern-Weil} to construct the twisted Chern-Weil homomorphism.
\end{rem}

\section{Appendix} \label{appendix}
Let us consider the extended  action of the circle $S^1$ on the  manifold $S^1$ as in example \ref{Example circle circle}, and let us calculate the cohomology of the uncompleted complex
$\Omega^\bullet(S^1) \otimes \real[ \Omega] \ \ \mbox{with differential} \ \ d_{\gr{g}, \delta}=d - d\theta \Omega \wedge.$

The complex is therefore given by even forms $\sum_{i=0}^n f_i \Omega^i$ where $f_i \in C^\infty(S^1)$, and by odd forms
$\sum_{j=0}^m g_jd\theta \Omega^j$ with $g_j d\theta \in \Omega^1(S^1)$. It follows that the closed forms are
all the odd forms.

Now let us find the cohomology of the complex;  for this we need to understand which odd forms are cohomologous.
If we consider the equality
\begin{eqnarray} \label{cohomology relation twisted complex}
d_{\gr{g}, \delta} (f_j \Omega^j)= \frac{\partial f_j}{\partial \theta} d \theta \Omega^j - f^j d \theta \Omega^{j+1}
\end{eqnarray}
we can see that the following odd forms are all cohomologous:

\begin{eqnarray*}
\sum_{i=0}^{n}g_i d \theta \Omega^i & \simeq &
g_0 d\theta + g_1 d \theta \Omega + \cdots + g_{n-2} d\theta \Omega^{n-2} + \left(\frac{\partial g_n}{\partial \theta} + g_{n-1} \right) d\theta \Omega^{n-1}\\
& \simeq &
g_0 d\theta + g_1 d \theta \Omega + \cdots + g_{n-3} d\theta \Omega^{n-3} + \left(\frac{\partial^2 g_{n}}{\partial \theta^2} + \frac{\partial g_{n-1}}{\partial \theta} + g_{n-2} \right) d\theta \Omega^{n-2}\\
 & \simeq & \left( \frac{\partial^n g_n}{\partial \theta^n} + \frac{\partial^{n-1} g_{n-1}}{\partial \theta^{n-1}} + \cdots + \frac{\partial g_{1}}{\partial \theta}
+g_0 \right)d\theta \Omega^0
\end{eqnarray*}

So we can focus only on the odd forms that have only nonzero component in the coefficient of $\Omega^0$.
Let us see which odd forms in $\Omega^1(S^1) \cong C^\infty(S^1) \otimes_\real \real[d\theta]$ are exact. We have that
$$d_{\gr{g},\delta} \left( \sum_{j=0}^m f_j u^j\right) = g d  \theta$$ for $f_j$ and $g$ in $C^\infty(S^1)$. This equation implies the following set
of equalities:
\begin{eqnarray*}
df_0 &=& gd\theta\\
df_1 & = & f_0 d \theta  \\
& \cdots& \\
df_m &=& f_{m-1} d \theta\\
0 &=& f_m d \theta
\end{eqnarray*}
and if we check these equations starting from bottom to top, we see that $0=f_m=f_{m-1} = \cdots = f_0$ and
therefore none of the forms of the type $gd\theta$ are exact.
Then we can conclude that the cohomology of the uncompleted complex  is
equal to $$H^{0}(\Omega^\bullet(S^1) \otimes \real[ \Omega]; d_{\gr{g}, \delta})=0 \ \ \ \ \ \ \ H^{1}(\Omega^\bullet(S^1) \otimes \real[ \Omega]; d_{\gr{g}, \delta})\cong \Omega^1(S^1).$$

Let us see  what is the $H^*(BS^1)=\real[\Omega]$ module structure. We just need to check what happens with the forms
whose only non zero coefficient is the one of $\Omega^0$. Then $\Omega \cdot (gd\theta) = g d \theta \Omega$, and from equation (\ref{cohomology relation twisted complex})
we have that $\Omega \cdot g d \theta  \simeq \frac{\partial g }{\partial \theta} d \theta$. Then, the $\real[\Omega]$ module structure
on $H^{1}(\Omega^\bullet(S^1) \otimes \real[ \Omega]; d_{\gr{g}, \delta})\cong C^\infty(S^1) \otimes \real \langle d\theta \rangle $ is given by the operator $\frac{\partial}{\partial \theta}: C^\infty(S^1) \to C^\infty(S^1),$
\begin{eqnarray*}
H^{1}(\Omega^\bullet(S^1) \otimes \real[ \Omega]; d_{\gr{g}, \delta}) & \stackrel{\Omega\cdot}{\To} & H^{1}(\Omega^\bullet(S^1) \otimes \real[ \Omega]; d_{\gr{g}, \delta})\\
C^\infty(S^1) & \stackrel{\frac{\partial}{\partial \theta}}{\To} & C^\infty(S^1).
\end{eqnarray*}


It is easy to see now that the torsion submodule of $C^\infty(S^1)$ as a $\real[\Omega]$-module is infinitely generated (the functions $\sin(k\theta)$ belong to the torsion submodule). Therefore the cohomology $H^{\bullet}(\Omega^\bullet(S^1) \otimes \real[ \Omega]; d_{\gr{g}, \delta})$ is infinitely generated as a $\real[\Omega]$-module and this prevents this cohomology to have a topological meaning.

\vspace{0.5cm}

Also note that the completed algebra $\Omega^\bullet(S^1) \otimes \real[[ \Omega]]$ is isomorphic to the algebra 
$$\Omega^\bullet(S^1) \otimes \real[ \Omega] \otimes_{\real[ \Omega]} \real[[ \Omega]].$$
Then one may think that one can calculate the twisted equivariant cohomology by tensoring with ${\otimes}_{\real[ \Omega]} \real[[ \Omega]$ the cohomology of the uncompleted differential complex. This turns out to be false in general as one can check from the previous example:
the twisted equivariant cohomology is $H^\bullet_{S^1}(S^1,  d \theta\Omega)= \real$ meanwhile the cohomology of the uncompleted differential complex tensored with ${\otimes}_{\real[ \Omega]} \real[[ \Omega]$ is
$$H^{\bullet}(\Omega^\bullet(S^1) \otimes \real[ \Omega]; d_{\gr{g}, \delta}) {\otimes}_{\real[ \Omega]} \real[[ \Omega] \cong \Omega^1(S^1) {\otimes}_{\real[ \Omega]} \real[[ \Omega]$$
which is an infinitely generated $\real[[\Omega]]$-module.

\bibliographystyle{alpha}
\bibliography{Chern-Weil-twisted}
\end{document}